\documentclass{amsart}

\usepackage{amsmath,amssymb,latexsym,amsthm,graphicx}



\newtheorem{theorem}{Theorem}[section]

\newtheorem{lemma}[theorem]{Lemma}

\newtheorem{prop}[theorem]{Proposition}

\newtheorem{defi}[theorem]{Definition}



\newcommand{\mR}{\mathcal{R}}

\newcommand{\mD}{\mathcal{D}}

\newcommand{\mQ}{\mathcal{Q}}

\newcommand{\mK}{\mathcal{K}}

\newcommand{\mC}{\mathcal{C}}

\newcommand{\rN}{\mathbb{R}}

\newcommand{\Hs}{\mathbb{H}}

\newcommand{\cN}{\mathbb{C}}

\newcommand{\nN}{\mathbb{N}}

\newcommand{\uS}{\mathbb{S}}

\newcommand{\uB}{\mathbb{B}}

\newcommand{\dB}{B}

\newcommand{\dS}{S}


\newcommand{\llg}{\lambda}

\newcommand{\ag}{\alpha}

\newcommand{\bg}{\beta}

\newcommand{\sg}{\sigma}

\newcommand{\og}{\omega}





\title[Range description for a spherical mean transform]{Range description for a spherical mean transform on spaces of constant curvatures}

\author{Linh V. Nguyen}
\address{Department of Mathematics, University of Idaho, Moscow, Idaho 83843}

\begin{document}

\maketitle

\begin{abstract}
We describe the range of a restricted spherical mean transform, which sends a function supported inside a closed ball in a hyperbolic space to its mean values on the geodesics spheres centered at the boundary of the ball. The description resembles that of the same transform on the Euclidean spaces obtained  by Mark Agranovsky, David Finch, and Peter Kuchment [Inverse Problems and Imaging, 3(3):373--382, 2009] and Mark Agranovsky and Linh V. Nguyen [J. Anal. Math., 112:351--367, 2010].

We also derive a similar characterization for the corresponding transform on the two dimensional spherical space.


\end{abstract}

\section{Introduction}\label{S:Intro}
Let us briefly introduce some basic notions of the hyperbolic space $\Hs^n$. We will work with its unit ball model, which is $\uB^{n} \equiv \{x \in \rN^n: |x|< 1\} \subset \rN^n$ equipped with the metric: $$ds^2(x)= \frac{4}{(1-|x|^2)^2} dx^2.$$ It is a Riemannian manifold of constant sectional curvature $-1$. The distance $d=d_{\Hs^n}(x,y)$ between two points $x$ and $y$ is: $$d_{\Hs^n}(x,y) = \log \left(\frac{\sqrt{1-2 \left<x,y \right>+ |x|^2 |y|^2} +|x-y|}{\sqrt{1-2 \left<x,y \right>+ |x|^2 |y|^2}-|x-y|} \right).$$
The Laplace-Beltrami operator $\Delta$ on $\Hs^n$: $$\Delta =  \frac{1}{4}(1-|x|^2)^{n} \nabla \left((1-|x|^2)^{2-n}\nabla \right).$$
In terms of polar coordinates,\begin{equation} \label{E:Polar}\Delta = \frac{\partial^2}{\partial r^2}+(n-1) \coth(r) \frac{\partial}{\partial r} + \frac{1}{\sinh^2(r)} \Delta_{\uS^{n-1}}.\end{equation} Here, $r=d_{\Hs^n}(x,0)$ and $\Delta_{\uS^{n-1}}$ is the Laplace-Beltrami operator on the Euclidean unit sphere $\uS^{n-1} \subset \rN^n$ applying to the variable $\theta = \frac{x}{|x|}$.

Let us now define the spherical mean transform $\mR$ on $\Hs^n$. For each $x \in \Hs^n$ and $r >0$, let $S_r(x) = \{y \in \Hs^n:d_{\Hs^n}(x,y)=r\}$ be the (non-Euclidean) sphere of radius $r$ centered at $x$. Then, $\mR(f)(x,r)$ is the mean value of $f$ on $S_r(x)$:  $$\mR(f)(x,r)= \frac{1}{|S_r(x)|}\int\limits_{S_r(x)} f(y) d\sg(y).$$ Here, $d\sg(y)$ is the measure on $S_r(x)$ induced by the above Riemannian metric and $|S_r(x)|$ is the total measure of $S_r(x)$. The transform $\mR$ has the following PDE characterization. Let $G(x,r) =\mR(f)(x,r)$, then (e.g., \cite{HelGeo}): \begin{eqnarray}\label{E:Darbh} \left\{ \begin{array}{l} [\partial_r^2 +(n-1)  \coth(r) \partial_r  -\Delta] G(x,r) =0, (x,r) \in \Hs^n \times \rN_+,\\  G(x,0)=f(x),~ G_r(x,0) = 0,~ x \in \Hs^n, \end{array}\right. \end{eqnarray} where, $\rN_+$ is the set of positive real numbers. Conversely, if $G(x,r) \in C^\infty(\Hs^n \times \overline{\rN}_+)$ satisfies the above equation,  then $G(x,r) = \mR(f)(x,r)$. The spherical mean transform on the hyperbolic spaces and other symmetric spaces has attracted considerable attention (e.g., \cite{HelGeo,BerZalc,ABK,Olev}). Let $S \subset \Hs^n$ be a sphere and $\mR_{S}(f)$ be the restriction of $\mR(f)$ to the set of spheres centered at $S$. The analog of $\mR_S$ on the Euclidean spaces plays an important role in thermoacoustic tomography, an emerging biomedical imaging modality (see, e.g., \cite{FPR,FHR,KKun}).


In this article, we study the range description of $\mR_S$. More precisely, we investigate the necessary and sufficient conditions for  a function $g$ defined on $S \times \overline{\rN}_+$, such that $$g(x,r) =\mR_{S}(f)(x,r), \quad (x,r) \in S \times \rN_+,$$ for some function $f \in C^\infty_0(\overline{B})$. Here, $B$ is the ball in $\Hs^n$ whose boundary is $S$ and $C_0^\infty(\overline{B})$ is the space of all functions $f \in C_0^\infty(\Hs^n)$ such that $supp(f) \subset \overline{B}$. 

The same problem in the Euclidean spaces $\rN^n$ has been resolved.  When $n=2$, the description was discovered by Ambartsoumian and Kuchment \cite{AmK06}. It includes the {\bf smoothness \& support}, {\bf orthogonality}, and {\bf moment} conditions. Finch and Rakesh \cite{FR07} obtained the range description for the transform in the odd dimensional spaces. It only includes two conditions, the smoothness \& support and orthogonality conditions. Agranovsky, Kuchment, and Quinto \cite{AKQ} then derived the range description for arbitrary dimensions. It includes three conditions: smoothness \& support, orthogonality, and moment conditions. They also proved that for odd dimensions, the moment condition is not needed. Then, Agranovsky, Finch, and Kuchment  \cite{AFK} proved that for all dimensions, the moment condition follows from the other two. This, indirectly, shows that the {\bf smoothness \& support and orthogonality} conditions are sufficient for the range description in all dimensions. Recently, Agranovsky and the author  took a different approach to prove that the two aforementioned conditions are sufficient for the range description \cite{ANg}. Our proof relies on the global extendibility of the solution for a Darboux-Euler-Poisson problem.

Let us return to the problem in the hyperbolic space. We might assume that  $S$ and $B$ are centered at the origin. That is, $S=S_R(0)$ and $B=B_R(0)$ for some $R>0$. We now describe the necessary conditions. The first condition, which we call {\bf support \& smoothness} condition is quite easy to observe: $g \in C^\infty(S \times \overline{\rN}_+)$, $g(x,r)=0$ for all $r \geq 2R$, and $g$ vanishes up to infinite order at $r=0$. We denote the space of all such functions by $C_0^\infty(S \times[0,2R])$.  The second one, orthogonality condition, comes from the PDE characterization (\ref{E:Darbh}) of the spherical mean transform.  To fully understand that condition, let us make a detour to some basic harmonic analysis on hyperbolic spaces. 

We recall that a horosphere in $\Hs^n$ is a Euclidean sphere contained in $\overline{\uB^n}$ and tangent to $\uS^{n-1}$ at a unique point $\eta$. A horosphere in a hyperbolic space plays the same role as a hyperplane in an Euclidean space. The tangent point $\eta$ determines the "normal direction" of the horosphere. Let $x \in \Hs^{n} \equiv \uB^n$ and $\eta \in \uS^{n-1}$, we define:
$$\left<x,\eta \right>= \log \left(\frac{1-|x|^2}{|x-\eta|^2}\right),$$ where $|x|$ is the Euclidean norm of $x \in \uB^n$. 
This is the distance, determined by the Riemannian metric on $\Hs^n$, from the origin $0 \in \uB^{n}$ to the horosphere through $x$ tangent to $\uS^{n-1}$ at $\eta$. For each $\llg \in \rN$, let $\mu=\mu(\llg)= \frac{2 i\llg+(n-1)}{2}$. Then,
\begin{equation} \label{E:Eig} \Delta_x e^{ \mu \left<x,\eta \right>} = -\frac{(n-1)^2+ 4 \llg^2}{4} e^{\mu \left<x,\eta \right>}.\end{equation}
Let $Y^m$ be a spherical harmonic of degree $m$ on $\uS^{n-1}$. That is, $Y^m$ satisfies:   $$\Delta_{\uS^{n-1}} Y^m(\eta)=-m(m+n-2)Y^m(\eta).$$
We define the function \begin{eqnarray}\label{E:Phim} \Phi_{m,\llg}(x) = \frac{1}{\og_{n-1}}\int\limits_{\uS^{n-1}} e^{ \mu \left<x,\eta\right>} Y^m(\eta) d\eta,\end{eqnarray} where $d\eta$ is the measure on $\uS^{n-1}$ induced by the Euclidean metric in $\rN^n$. 
Then, there is a function $h_{m,\llg}:\overline{\rN}_+ \to \rN$ such that (e.g., \cite{Volch-b}): $$ \Phi_{m,\llg}(x)=h_{m,\llg}(r) Y^m(\theta).$$ Here, $r=d_{\Hs^n}(x,0)$ and $\theta=\frac{x}{|x|}$. Due to (\ref{E:Eig}): $$\Delta \Phi_{m,\llg}(x) = -\frac{(n-1)^2+ 4\llg^2}{4} \Phi_{m,\llg} (x). $$
From the polar coordinate decomposition (\ref{E:Polar}), we obtain: $$\left[d^2_r + (n-1) \coth(r) d_r- \frac{m(m+n-2)}{\sinh^2(r)}\right] h_{m,\llg}(r) = - \frac{(n-1)^2+4 \llg^2}{4} h_{m,\llg}(r).$$
Due to (\ref{E:Phim}), we also observe that $h_{m,\llg}(0)=1$ and $\frac{d h_{m,\llg}}{dr}(0)=0$. In particular, $h_\llg:=h_{0,\llg}$ satisfies: \begin{equation}\label{E:h} \left\{\begin{array}{l} \left[d^2_r + (n-1) \coth(r) d_r \right] h_\llg(r) = - \frac{(n-1)^2+ 4\llg^2}{4} h_\llg(r), \\h_{\llg}(0)=1,~ h'_\llg(0)=0.\end{array}\right.\end{equation}
The function $h_\llg(r)$ is the hyperbolic analog of the well known Bessel function $j_{\frac{n}{2}-1}(\llg r)$, which was used to characterized the spherical transform $\mR_S$ on the Euclidean spaces (e.g., \cite{AKQ}). 

Let us consider the Laplace-Beltrami operator $\Delta$ in $\dB$ with zero Dirichlet boundary condition. It has a discrete set of eigenvalues $\left\{-\frac{(n-1)^2+ 4 \llg_k^2}{4} \right\}_{k=1}^\infty$.  Let $\{\varphi_k\}_{k=1}^\infty$ be an orthonormal basis of $L^2(B)$ consisting of the corresponding eigenfunctions. For each $k$, we denote $u_k(x,r) = h_{\llg_k}(r) \varphi_k(x)$. Then, $u_k \in C^\infty(\overline{B} \times \overline{\rN}_+)$ and satisfies the equation: \begin{eqnarray}\label{E:InDarb} \left\{ \begin{array}{l} [\partial_r^2 +(n-1)  \coth(r)\partial_r -\Delta] u_k(x,r) =0,~ (x,r) \in \dB \times \rN_+, \\ \partial_r u_k(x,0)=0,~ \forall x \in \dB,\\ u_k(y,r)=0,~\forall (y,r) \in \dS \times \rN_+. \end{array} \right. \end{eqnarray}

We are now ready to describe the orthogonality condition for $\mR_S$. Multiplying the equation (\ref{E:Darbh}) by $\sinh^{n-1}(r) u_k(x,r)$ and then taking integration over the domain $\dB \times \rN_+$, we obtain \begin{eqnarray*} \int\limits_{\rN_+} \int\limits_{\dB}[\partial^2_{r} + (n-1) \coth(r) \partial_r - \Delta] G(x,r) u_k(x,r) \sinh^{n-1}(r) dx dr =0.\end{eqnarray*}
Taking integration by parts and using equation (\ref{E:InDarb}), we arrive to \begin{eqnarray*}\int\limits_{\rN_+} \int\limits_{\dS} g(x,r) \partial_\nu u_k(x,r) \sinh^{n-1}(r) d\sg(x) dr =0.\end{eqnarray*}
Here, $\partial_\nu$ is the outward normal derivative. Since $u_k(x,r)= h_{\llg_k}(r) \varphi_k(x)$, we obtain the {\bf orthogonality condition}: $$\int \limits_{\rN_+} \int\limits_{\dS} g(x,r) \partial_\nu \varphi_k(x) h_{\llg_k}(r) \sinh^{n-1}(r) d\sg(x) dr =0.$$

In this article, we prove that the {smoothness \& support and orthogonality} conditions are sufficient for range the description of $\mR_{\dS}$: \begin{theorem}\label{T:MainH}
Let $g$ be a function defined on $\dS \times \overline{\rN}_+$. Then, there is a function $f \in C_0^\infty(\overline{\dB})$ such that $g=\mR_{\dS}(f)$ if and only if the following conditions are satisfied:
\begin{itemize}
\item[1.]{Smoothness $\&$ support condition:} $g \in C_0^\infty(\dS \times [0,2R])$.
\item[2.]{Orthogonality condition:} \begin{eqnarray}\label{E:OrthS} \int \limits_{\rN_+} \int\limits_{\dS} g(x,r) \partial_\nu \varphi_k(x) h_{\llg_k}(r) \sinh^{n-1}(r) d\sg(x) dr =0.\end{eqnarray}
\end{itemize} 
\end{theorem}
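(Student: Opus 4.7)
Necessity of both conditions was already derived in the introduction: the smoothness and support of $g$ from the vanishing of $\mR(f)(x,r)$ whenever $S_r(x)\cap\overline{\dB}=\emptyset$, and the orthogonality condition from the Green-type identity with $u_k=h_{\llg_k}(r)\varphi_k(x)$. For sufficiency, the plan is, given $g$ satisfying (1) and (2), to construct a global function $G\in C^\infty(\Hs^n\times\overline{\rN}_+)$ solving the Darboux system (\ref{E:Darbh}) with $G|_{\dS\times\rN_+}=g$ and $G(\cdot,0)$ supported in $\overline{\dB}$. Setting $f:=G(\cdot,0)$ and invoking the converse direction of the PDE characterization stated right after (\ref{E:Darbh}), we will then get $G=\mR(f)$ and hence $\mR_{\dS}(f)=g$.

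The construction is done in two pieces, glued across $\dS$. In the exterior $(\Hs^n\setminus\overline{\dB})\times\overline{\rN}_+$, let $G_{\mathrm{ext}}$ be the unique solution of the hyperbolic mixed problem $L G_{\mathrm{ext}}=0$, $G_{\mathrm{ext}}(x,0)=\partial_r G_{\mathrm{ext}}(x,0)=0$, $G_{\mathrm{ext}}|_{\dS}=g$, where $L=\partial_r^2+(n-1)\coth(r)\partial_r-\Delta$. The flatness of $g$ at $r=0$ gives $C^\infty$ smoothness up to the corner, and finite propagation combined with the compact $r$-support of $g$ controls the support of $G_{\mathrm{ext}}$ on compact subsets of $x$. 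In the interior $\overline{\dB}\times\overline{\rN}_+$, let $G_0$ solve the analogous IBVP with zero Cauchy data and Dirichlet data $g$, and seek a correction $G_{\mathrm{int}}=G_0+W$ of the form
\[
W(x,r)=\sum_k c_k\, h_{\llg_k}(r)\,\varphi_k(x).
\]
By the properties recorded in the introduction, each summand lies in $\ker L$, vanishes on $\dS$ (since $\varphi_k|_{\dS}=0$), and has vanishing $r$-derivative at $r=0$ (since $h'_{\llg_k}(0)=0$), so $G_{\mathrm{int}}$ still satisfies $L G_{\mathrm{int}}=0$, $G_{\mathrm{int}}|_{\dS}=g$, and $\partial_r G_{\mathrm{int}}(\cdot,0)=0$. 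The coefficients $(c_k)$ are forced by the requirement of $C^1$-matching across $\dS\times\rN_+$, namely
\[
\sum_k c_k\, h_{\llg_k}(r)\,\partial_\nu\varphi_k(y)=\partial_\nu G_{\mathrm{ext}}(y,r)-\partial_\nu G_0(y,r)=:\Psi(y,r).
\]
The candidate initial datum is then $f(x)=\sum_k c_k\varphi_k(x)$, and the glued $G=G_{\mathrm{ext}}\cup G_{\mathrm{int}}$ is $C^1$ across $\dS\times\rN_+$, hence $C^\infty$ by the equation, with $G(\cdot,0)$ supported in $\overline{\dB}$ and $\partial_r G(\cdot,0)=0$ globally.

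The main obstacle will be solving the matching equation with smooth, rapidly decreasing coefficients and recovering $f\in C_0^\infty(\overline{\dB})$. The link to the orthogonality hypothesis is a Green's-identity computation: pairing $G_0$ against $u_j=h_{\llg_j}(r)\varphi_j(x)$ with weight $\sinh^{n-1}(r)$ over $\dB\times(0,T)$ and letting $T\to\infty$, the moment of $\Psi$ against $h_{\llg_j}\,\partial_\nu\varphi_j\,\sinh^{n-1}(r)$ is expressed, up to controlled $r=\infty$ terms, as the moment of $g$ against the same function, which vanishes by (\ref{E:OrthS}); this yields the compatibility that lets the matching equation be solved. Injectivity of the map $(c_k)\mapsto\sum_k c_k h_{\llg_k}\partial_\nu\varphi_k$ will be established by Holmgren-type unique continuation for $L$, and the quantitative smoothness and decay of $c_k$ needed for $f\in C_0^\infty(\overline{\dB})$ follow from Weyl-type asymptotics for $\llg_k$ together with the smoothness of $\Psi$ inherited from $g$. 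This adapts the approach of Agranovsky and the author \cite{ANg} for the Euclidean setting, with the exponential decay of $h_{\llg_k}(r)$ as $r\to\infty$ (read from the indicial analysis of (\ref{E:h})) playing the decisive role in balancing the exponentially growing weight $\sinh^{n-1}(r)$ at infinity.
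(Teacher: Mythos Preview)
There are two substantive gaps. First, the matching equation $\sum_k c_k\,h_{\llg_k}(r)\,\partial_\nu\varphi_k(y)=\Psi(y,r)$ on $\dS\times\rN_+$ is generically unsolvable: the family on the left is far too thin to span smooth functions of $(y,r)$, since in the spectral variable it lives only at the discrete set $\{\llg_k\}$. Your ``compatibility from orthogonality'' produces one scalar identity per $k$, not the continuum of constraints needed to place $\Psi$ in that span; and even that identity is not cleanly available, because the interior solution $G_0$ is \emph{not} compactly supported in $r$ (energy injected through $\dS$ reverberates inside $\dB$ indefinitely), so the boundary terms at $r=T$ in your Green's identity do not vanish as $T\to\infty$. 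Second, your asymptotic claim is wrong: from (\ref{E:h}) the indicial exponents at $r=\infty$ are $-\tfrac{n-1}{2}\pm i\llg$, so $h_\llg(r)\sinh^{n-1}(r)\sim e^{(n-1)r/2}$ \emph{grows}; there is no balance.

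The paper avoids all of this by running time backward. It solves the time-reversed problem (\ref{E:TimeR}) from $r=2R$ down to $r=0$ (standard hyperbolic theory for $r>0$), and uses the orthogonality condition exactly once---via the Fourier--Legendre/Paley--Wiener transform $T$ in Theorem~\ref{T:ST1}---to kill a pole and show the solution remains smooth through the singular time $r=0$. The resulting $f^*=U(\cdot,0)$ must then be shown to vanish to infinite order on $\dS$; this is the paper's hardest step (Theorem~\ref{T:Sup} and Lemma~\ref{L:Ind}), carried out by a delicate algebraic argument with the operators $\mD_m$ and $\mQ_m=\Gamma_1\cdots\Gamma_m$. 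Your outline has no analogue of this step: in your scheme the boundary vanishing of $f$ is hidden inside the solvability of the matching equation, which is precisely what is left unproved.
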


A similar result also holds for the same transform on the spherical space $\uS^2$, which is a unit sphere in $\rN^{3}$ (see, Theorem \ref{T:MainP}). The article is organized as follows. Sections \ref{S:Proof} and \ref{S:LM} are dedicated to the proof of Theorem \ref{T:MainH}. The statement and a sketchy proof of the result for the transform on $\uS^2$ is presented in Section \ref{S:P}.

\section{Proof of theorem \ref{T:MainH}}\label{S:Proof}
The necessity of the conditions in Theorem \ref{T:MainH} was discussed in Section \ref{S:Intro}.  We now prove the sufficiency, that is, under assumption 1) and 2) of Theorem \ref{T:MainH}, there is a function $f \in C_0^\infty(\overline{\dB})$ such that $\mR_S(f)=g$. 
We follow the strategy in \cite{AKQ} and \cite{ANg}.  The proof consists of three steps. The first one is to show that there exists a (unique) solution $U \in C^\infty(\overline{B}  \times \overline{\rN}_+)$ of the time-reversed problem \begin{eqnarray} \label{E:TimeR}\left\{ \begin{array}{l} [\partial_r^2 +(n-1)  \coth(r) \partial_r -\Delta] U(x,r)=0, \mbox{ in } \dB \times (0,2R], \\ U(x,r)|_{\dS\times[0,2R]} =g(x,r),\quad (x, r) \in \dS \times (0,2R].\\
U(x,r)=0,~ U_r(x,r)=0,~\forall x \in \dB \mbox{ and } r \geq 2R.
\end{array} \right.
\end{eqnarray}
The second step is to prove that $f^*(x)=U(x,0)$ vanishes up to infinite order on the boundary $\dS$ of $\dB$. The last one is to  show that $g=\mR_\dS(f)$, where $f$ is the zero extension of $f^*$ to $\Hs^n$.

\subsection{Regularity of the internal solution}

\begin{theorem}\label{T:ST1} Assume the smoothness $\&$ support and orthogonality conditions. Then, problem (\ref{E:TimeR}) has a unique solution $U \in C^\infty( \overline{\dB} \times \overline{\rN}_+)$. \end{theorem}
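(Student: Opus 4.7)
My plan is to reduce the theorem to a family of one-dimensional singular ODEs obtained by expansion in the Dirichlet eigenbasis $\{\varphi_k\}$ of $-\Delta$ on $\dB$, and to show that the orthogonality condition is precisely what forces each coefficient to extend smoothly across the singular endpoint $r=0$. First I would absorb the nonzero boundary data by choosing a Borel-type extension $\hat g\in C^\infty(\overline{\dB}\times[0,2R])$ satisfying $\hat g|_{\dS}=g$, vanishing to infinite order at $r=0$ and at $r=2R$. Writing $V=U-\hat g$ and $F=-[\partial_r^2+(n-1)\coth r\,\partial_r-\Delta]\hat g$, the unknown $V$ must satisfy the inhomogeneous equation with $V|_{\dS}=0$ and $V=V_r=0$ at $r=2R$, while $F\in C^\infty(\overline{\dB}\times[0,2R])$ vanishes to infinite order at both $r=0$ and $r=2R$. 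Expand $V=\sum_k v_k(r)\varphi_k(x)$ and $F=\sum_k F_k(r)\varphi_k(x)$; each $v_k$ then solves the terminal value problem
\begin{equation*}
v_k''+(n-1)\coth(r)\,v_k'+\tfrac{(n-1)^2+4\llg_k^2}{4}v_k=F_k,\quad v_k(2R)=v_k'(2R)=0.
\end{equation*}

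Rewriting the radial operator in Liouville form $\frac{1}{\sinh^{n-1}(r)}\partial_r\bigl(\sinh^{n-1}(r)\,\partial_r\bigr)$ and integrating by parts exactly as in the derivation of the orthogonality condition in Section~\ref{S:Intro}, I would first show that the hypothesis on $g$ translates into $\int_0^{2R}F_k(r)h_{\llg_k}(r)\sinh^{n-1}(r)\,dr=0$ for every $k$ (the boundary contributions at $r=0,2R$ vanish because $\hat g$ does). Let $\tilde h_{\llg_k}$ be a companion solution of the homogeneous ODE, normalized so that $\sinh^{n-1}(r)$ times the Wronskian of $h_{\llg_k}$ and $\tilde h_{\llg_k}$ is identically one; a Frobenius analysis at the regular singular point $r=0$ gives the second indicial exponent $2-n$, so $\tilde h_{\llg_k}(r)=O(r^{2-n})$ as $r\to 0$ (logarithmic when $n=2$). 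Variation of parameters with the terminal conditions yields
\begin{equation*}
v_k(r)=\tilde h_{\llg_k}(r)\!\int_r^{2R}\! h_{\llg_k}F_k\sinh^{n-1}\,d\rho\;-\;h_{\llg_k}(r)\!\int_r^{2R}\!\tilde h_{\llg_k}F_k\sinh^{n-1}\,d\rho.
\end{equation*}
Only the first term is threatened by the singularity of $\tilde h_{\llg_k}$, but orthogonality lets me rewrite its integral as $-\int_0^r h_{\llg_k}F_k\sinh^{n-1}\,d\rho$, which vanishes to infinite order at $r=0$ because $F_k$ does; multiplying by $\tilde h_{\llg_k}=O(r^{2-n})$ still leaves infinite-order vanishing, hence $v_k\in C^\infty([0,2R])$ with $v_k$ vanishing to infinite order at $r=2R$, so the zero extension across $r=2R$ is automatic.

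The remaining task is convergence of $\sum_k v_k\varphi_k$ in $C^\infty(\overline{\dB}\times\overline{\rN}_+)$. Smoothness of $F$ on $\overline{\dB}\times[0,2R]$ yields rapid decay of $\|F_k\|_{C^p([0,2R])}$ in $\llg_k$, and the variation-of-parameters formula propagates that decay to $v_k$; spatial derivatives of the series are recovered through iterated powers of $\Delta$, which on eigenfunctions convert into powers of $\llg_k$, and coupling this with standard Weyl-type bounds on $\varphi_k$ in $C^p(\overline{\dB})$ delivers absolute convergence in every $C^p$. Uniqueness is a standard energy estimate for the hyperbolic operator in the timelike variable $r$ on $(0,2R]$, then propagated to $r=0$ by smoothness. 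The main obstacle I anticipate is precisely the interplay between the singular Frobenius exponent $2-n$ of $\tilde h_{\llg_k}$ at $r=0$ and the cancellation furnished by the orthogonality condition, and carrying this out \emph{uniformly in $k$} so as to secure $C^\infty$ convergence of the series; this is presumably what is packaged into the lemmas of Section~\ref{S:LM}.
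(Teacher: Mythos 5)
Your mode-by-mode analysis is essentially sound: the translation of the orthogonality condition into $\int_0^{2R}F_k(r)h_{\llg_k}(r)\sinh^{n-1}(r)\,dr=0$ is correct (the boundary terms die because $\varphi_k|_{\dS}=0$ and $\hat g$ is flat at $r=0,2R$), and the variation-of-parameters argument does show that each individual $v_k$ extends to $C^\infty([0,2R])$, with orthogonality killing the $r^{2-n}$ singularity. The genuine gap is the last step, the convergence of $\sum_k v_k\varphi_k$ in $C^\infty(\overline{\dB}\times[0,2R])$. The claimed ``rapid decay of $\|F_k\|_{C^p([0,2R])}$ in $\llg_k$'' is false: $F=-[\partial_r^2+(n-1)\coth r\,\partial_r-\Delta]\hat g$ does not vanish on $\dS$, so integrating $\int_B F\varphi_k$ by parts against $\Delta\varphi_k$ produces the boundary term $\int_{\dS}F\,\partial_\nu\varphi_k\,d\sg$, which does not vanish and grows with $\llg_k$; the coefficients $F_k$ (and likewise $v_k$) therefore decay only like a \emph{fixed} power of $\llg_k^{-1}$, while $\|\varphi_k\|_{C^p(\overline{\dB})}$ grows like a power of $\llg_k$ increasing with $p$. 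Termwise summation thus cannot give convergence in every $C^p$; this is already the case for the ordinary wave equation with inhomogeneous Dirichlet data, where one never proves smoothness this way. To rescue your route you would need uniform-in-$k$ estimates on $\tilde h_{\llg_k}$ and on the variation-of-parameters integrals near $r=0$, together with some regularity bootstrap at the degenerate point $r=0$ --- and none of this is ``packaged into Section~\ref{S:LM}'', which concerns Lemma~\ref{L:Ind} (Step 2, the flatness of $f^*$ on $\dS$), not the present theorem.

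The paper sidesteps the whole issue by a different mechanism: it builds an intertwining bijection $T$ of $\mC_{2R}(H^s(\dB))$ satisfying $T(u_{tt}-\tfrac{(n-1)^2}{4}u)=B_rT(u)$, using the Paley--Wiener theorems for the Fourier and Fourier--Legendre transforms. This converts (\ref{E:TimeR}) into the \emph{nonsingular} wave problem (\ref{E:CWave}), whose solution is smooth up to the corner by standard hyperbolic theory; smoothness of $U=T(V)$ at $r=0$ is then automatic because $T$ preserves the class of smooth, even, compactly supported $H^s(\dB)$-valued functions. The Dirichlet eigenfunction expansion is used only to verify the qualitative statement $\og_k\in\mC_{2R}(\cN)$ (support in $[-2R,2R]$ and evenness), for which no uniformity in $k$ is needed; that is where orthogonality enters, via $\widetilde{p}_k(\llg_k)=0$ making $\widetilde{p}_k(\llg)/(\llg^2-\llg_k^2)$ entire of Paley--Wiener class. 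Your observation that orthogonality is exactly the condition removing the singular Frobenius branch at $r=0$ is the same phenomenon seen on the other side of the transform, but as written your proposal does not prove the $C^\infty$ regularity asserted in the theorem.
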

Equation (\ref{E:TimeR}) is hyperbolic for all $r>0$. Standard theory for hyperbolic equation (e.g., \cite{Evb}) shows that $U \in C^\infty(\overline{\dB} \times \rN_+)$. However, when $r=0$ the equation is singular. This prevents us from using any standard theory to derive the regularity of the solution at $r=0$. Our strategy is to get rid of that singularity by transforming (\ref{E:TimeR}) into a wave equation.

Given a Hilbert space $H$, we now introduce some notations. The space $C_0^\infty(\rN;H)$ consists of smooth functions $v: \rN \to H$ with compact supports, and $\mC(H)$ consists of even fuctions $v \in C_0^\infty(\rN;H)$. Given $a>0$, $\mC_a(H)=\{v \in \mC(H): supp(f) \subset [-a,a]\}$. 
\begin{defi}  Let $PW_a(H)$ be the space of entire functions $g: \cN \to H$ such that for each $k \in \nN$, there is a constant $C_k$ such that $$\|g(\llg)\| \leq C (1+|\llg|)^{-N} e^{a \Im (\llg)},~\forall \llg \in \cN.$$\end{defi}

In this article, $H$ will be either $\cN,H^s(S)$, or $H^s(B)$ (for some $s>0$), depending on the context. Let $v \in \mC(H)$, we define the following Fourier-Legendre transform $\widehat{v}$ of $v$ by: $$\widehat{v}(\llg)=\int\limits_0\limits^\infty v(r) h_\llg(r) \sinh^{n-1}(r) dr,$$ where $h_\llg(r)$ is defined in equation (\ref{E:h}). Similar to the standard Fourier transform, this transform also has the Paley-Wiener property:

\begin{lemma}\label{L:PW}
An function $g: \rN \to H$ satisfies $g=\widehat{v}$ for a unique $v \in \mC_a(H)$ if and only if:
\begin{enumerate}
\item $g$ is even,
\item $g$ extends to a function in $PW_a(H)$.
\end{enumerate}
\end{lemma}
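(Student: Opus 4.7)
The plan is to reduce this to the classical $H$-valued Paley--Wiener theorem for the Euclidean Fourier cosine transform, by means of an Abel-type transform. The starting point is the standard Mehler/Koornwinder integral representation of the radial spherical function on $\Hs^n$: there is a smooth kernel $K_n(r,s)$, even in $s$, supported in $\{|s|\le r\}$ and independent of $\llg$, such that
\begin{equation*}
h_\llg(r)\,\sinh^{n-1}(r)=\int_{-r}^{r} K_n(r,s)\,e^{i\llg s}\,ds, \qquad \llg\in\cN.
\end{equation*}
This representation is a direct consequence of the ODE \mref{E:h} together with the initial data $h_\llg(0)=1$, $h'_\llg(0)=0$: for odd $n$ one obtains it by repeated differentiation in $r$ of the d'Alembert formula applied to the reduced wave equation; for even $n$ one inserts a fractional integration step. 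Both versions are spelled out in standard references on harmonic analysis on rank-one symmetric spaces.

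Granting the kernel representation, I would substitute it into the definition of $\widehat v$ and interchange the order of integration (legitimate by the compact support of $v$) to obtain
\begin{equation*}
\widehat v(\llg)=\int_{-\infty}^{\infty} V(s)\,e^{i\llg s}\,ds, \qquad V(s):=\int_{|s|}^{\infty} v(r)\,K_n(r,s)\,dr.
\end{equation*}
The $H$-valued function $V$ is even and smooth; its support lies in $[-a,a]$ whenever that of $v$ does; and the map $v\mapsto V$ is a bijection of $\mC(H)$ onto itself whose inverse is an Abel-type operator (differential for odd $n$, fractional-differential for even $n$) that also preserves supports. Thus $v\mapsto \widehat v$ factors as this Abel transform followed by the Euclidean Fourier transform restricted to even $H$-valued functions.

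With this factorization the lemma reduces to the classical Paley--Wiener theorem for even $H$-valued Schwartz functions on $\rN$: such a function $g$ is the Euclidean Fourier transform of some $V\in\mC_a(H)$ if and only if $g$ is even and extends to an entire function lying in $PW_a(H)$. Uniqueness of $v$ is automatic because both the Abel and the Fourier transforms are injective on their natural domains. The only nontrivial technical point---and therefore the main obstacle---is controlling the Abel transform $v\mapsto V$ in the $H$-valued setting: one has to check that it preserves smoothness in $r$, even parity, and the support condition $[0,a]$, and that its inversion formula is compatible with compact supports. These facts are standard in the scalar case and carry over to the Banach-space-valued case either componentwise (when $H$ is separable and a basis is chosen, which covers the three spaces $\cN,H^s(\dS),H^s(\dB)$ used in the paper) or by routine Bochner-integral manipulations.
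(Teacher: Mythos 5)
Your argument is sound and in fact supplies more of a proof than the paper does: the paper disposes of Lemma~\ref{L:PW} in one line, citing Helgason \cite{HelDual} for the scalar case $H=\cN$ and asserting that the general case "can be proved exactly in the same way." The factorization you describe --- spherical transform $=$ (Euclidean Fourier transform) $\circ$ (Abel transform), with the Paley--Wiener statement then following from the classical one plus support preservation of the Abel transform and its inverse --- is the standard route to this theorem and is essentially the content of the cited reference, so your proposal and the paper's citation point at the same underlying argument. Two cautions. First, the kernel $K_n(r,s)$ is not smooth: for even $n$ it has an integrable singularity at $|s|=r$ (for $n=2$ it is the Mehler kernel, proportional to $(\cosh r-\cosh s)^{-1/2}$), and for odd $n$ the Abel transform is a differential operator, so the "kernel" is distributional; your remark about d'Alembert versus fractional integration shows you know this, but the word "smooth" should be dropped. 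Second, the crux of the hard direction is precisely the point you defer as a "technical point": the inverse Abel transform is one-sided (it reconstructs $v(r)$ from values of $V(s)$ with $s\ge r$ only), which is what forces $\mathrm{supp}(v)\subset[0,a]$ from $\mathrm{supp}(V)\subset[-a,a]$; this should be stated as the mechanism rather than left implicit. The passage to $H$-valued functions is indeed routine (componentwise in an orthonormal basis of $H$, or by Bochner-integral versions of the same manipulations), since all the operators in your factorization are bounded linear maps with $\llg$-independent kernels, which matches the paper's one-sentence claim that the general case goes through unchanged.
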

The Lemma is due to \cite{HelDual} for $H=\cN$. The general case can be proved exactly in the same way. 
\begin{prop}
Let $H$ be a Hilbert space, and $a>0$. For each $u=u(t) \in \mC_a(H)$, let $T(u)(r)=v(r)$ be such that $\widehat{v}(\llg) = \widetilde{u}(\llg)$, where $\widetilde{u}$ is the Fourier transform of $u$. Then, $T: \mC_a(H) \to \mC_a(H)$ is bijective and satisfies \begin{equation} \label{E:TH} T \left(u_{tt}-\frac{(n-1)^2}{4}u\right) =B_rT(u),\end{equation} where $$B_r=\frac{d^2}{d r^2}+ (n-1) \coth(r) \frac{d}{d r}.$$
$\fbox{}$
\end{prop}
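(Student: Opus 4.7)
My plan is to read off bijectivity from the two Paley--Wiener theorems and to derive the intertwining identity (\ref{E:TH}) from the fact that $h_\llg$ diagonalizes $B_r$ with eigenvalue $-\tfrac{(n-1)^2+4\llg^2}{4}$---the same multiplier that the classical Fourier transform produces for $\partial_t^2 - \tfrac{(n-1)^2}{4}$.

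First, I would verify that $T$ is a well-defined bijection on $\mC_a(H)$. For $u \in \mC_a(H)$, the classical (vector-valued) Paley--Wiener theorem gives $\widetilde{u} \in PW_a(H)$, and evenness of $u$ makes $\widetilde{u}$ even; \reflemm{L:PW} then produces a unique $v \in \mC_a(H)$ with $\widehat{v}=\widetilde{u}$, which we set as $T(u)$. Running the argument in reverse---starting from $v \in \mC_a(H)$ and using the classical Paley--Wiener theorem in place of \reflemm{L:PW}---supplies the inverse map, establishing bijectivity.

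For the identity (\ref{E:TH}), the key remark is that $B_r$ is formally self-adjoint with respect to the measure $\sinh^{n-1}(r)\,dr$ because
$$B_r \psi = \frac{1}{\sinh^{n-1}(r)} \frac{d}{dr}\!\left(\sinh^{n-1}(r)\,\frac{d\psi}{dr}\right).$$
For $v \in \mC_a(H)$, two integrations by parts against $h_\llg$ move $B_r$ onto $h_\llg$; the boundary contribution at $r=\infty$ vanishes by compact support and the one at $r=0$ vanishes because $\sinh^{n-1}(0)=0$. Combined with the eigenvalue equation (\ref{E:h}) this yields
$$\widehat{B_r v}(\llg) \;=\; -\frac{(n-1)^2+4\llg^2}{4}\,\widehat{v}(\llg).$$
A routine computation on the Fourier side gives the parallel identity
$$\widetilde{u_{tt} - \tfrac{(n-1)^2}{4}u}(\llg) \;=\; -\frac{(n-1)^2+4\llg^2}{4}\,\widetilde{u}(\llg).$$
Since $\widehat{T(u)}=\widetilde{u}$, the right-hand sides agree, so
$$\widehat{B_r T(u)}(\llg) \;=\; \widehat{T\bigl(u_{tt} - \tfrac{(n-1)^2}{4}u\bigr)}(\llg),$$
and the uniqueness clause of \reflemm{L:PW} forces (\ref{E:TH}).

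The one point needing genuine checking is that $B_r v \in \mC_a(H)$, so that $\widehat{B_r v}$ is actually defined. The potentially troublesome term is $(n-1)\coth(r)\,v'(r)$ at $r=0$, but writing $v'(r)=r\,w(r)$ with $w$ smooth and even (possible because $v$ is smooth and even) and using $\coth(r)=1/r+O(r)$ shows that this term extends smoothly across $r=0$. Evenness of $B_r v$ and the support condition $supp(B_r v)\subset[-a,a]$ are immediate, so this is only a mild technicality rather than a real obstacle.
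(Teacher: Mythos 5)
Your proposal is correct and follows essentially the same route as the paper: well-definedness via the classical Paley--Wiener theorem combined with Lemma \ref{L:PW}, and the intertwining identity by matching the multiplier $-\frac{(n-1)^2+4\llg^2}{4}$ produced on the Fourier side with the one produced by integrating $B_r$ by parts against $h_\llg$. You merely fill in details the paper leaves as ``simple integrations by parts'' (the divergence form of $B_r$, the vanishing boundary terms, and the smoothness of $\coth(r)\,v'(r)$ at $r=0$), which is fine.
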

 
 \begin{proof} 
We first show that $T$ is a well defined map from $\mC_a(H)$ to itself. Let $u \in \mC_a(H)$, then $\widetilde{u}$ is extends to an even function in $PW_a(H)$, due to the standard Paley-Wiener theory for the Fourier transform. Due to  Lemma \ref{L:PW}, there is a unique function $v \in \mC_a(H)$ such that $\widehat{v}=\widetilde{u}$. Therefore, $T: \mC_a(H) \to \mC_a(H)$ is well defined. The bijectivity is straight forward.

We now prove (\ref{E:TH}). Indeed, the Fourier transform of $\left(u_{tt}-\frac{(n-1)^2}{4}u\right)$ is equal to \begin{equation} \label{E:Furr} -\frac{(n-1)^2+4\llg^2}{4}\widetilde{u}(\llg).\end{equation} 
Due to (\ref{E:h}), $$B_r(h_\llg) = -\frac{(n-1)^2+4 \llg^2}{4} h_\llg.$$ Simple integrations by parts show that the Fourier-Legendre transform of $B_rv$ is equal to: \begin{equation}\label{E:Lb}-\frac{(n-1)^2+ 4 \llg^2}{4} \widehat{v}(\llg).\end{equation}
If $v=T(u)$, then (\ref{E:Furr}) and (\ref{E:Lb}) are equal. This implies: 
\begin{eqnarray*} T\left(u_{tt}-\frac{(n-1)^2}{4}u\right) =B_r v =  B_r T(u).\end{eqnarray*}
 \end{proof}
 
\begin{proof}[Proof of Theorem \ref{T:ST1}]  Let us extend $g(x,r)$ to an even function in $r$. The orthogonality condition (\ref{E:OrthS}) is equivalent to:
\begin{eqnarray}\label{E:OBF}  \int\limits_{\dS} \widehat{g}(x,\llg_k) \partial_\nu \varphi_k(x) d\sg(x) =0.\end{eqnarray}
Here, $\widehat{g}(x,\llg)$ is the Fourier-Legendre transform of $g(x,r)$ with respect to the variable $r$. 

To explain the idea, let us at the moment assume that $U \in C^\infty(\overline{\dB} \times \overline{\rN}_+)$. Extending $U$ evenly with respect to $r$, we obtain $U \in \mC_{2R}(H^s(\dB))$ for any $s>0$. Moreover, due to and (\ref{E:TimeR}) and (\ref{E:TH}), $V=T^{-1}(U) \in \mC_{2R}(H^s(B))$ solves: \begin{eqnarray}\label{E:CWave} \left\{\begin{array}{l} V_{tt}(x,t)-\frac{(n-1)^2}{4}V(x,t) - \Delta V(x,t) =0,~ x \in \dB, \\ V(x,t)=b(x,t),~ x \in \dS,\\  V(x,-2R)=0,~V_t(x,-2R)=0, ~ x \in \dB \end{array}  \right. \end{eqnarray}
where $b(x,t) =T^{-1}(g)$. Conversely, given a solution $V \in \mC_{2R}(H^s(B))$ of (\ref{E:CWave}), $U=T(V) \in \mC_{2R}(H^s(\dB))$ is the solution of (\ref{E:TimeR}). Therefore, it suffices to prove that (\ref{E:CWave}) has a solution $V \in \mC_{2R}(H^s(\dB))$ for all $s>0$. We now proceed to achieve this goal.

For any $s>0$, due to the smoothness \& support condition: $g \in \mC_{2R}(H^s(\dS))$. Hence, $b=T^{-1}(g) \in \mC_{2R}(H^s(\dS))$.  Because of the standard theory for wave equations (e.g., \cite{Evb}), equation (\ref{E:CWave}) has a solution $V \in C^\infty([-2R,\infty); H^{s}(\dB))$. Extending it by zero for $t \leq -2R$, we obtain $V \in C^\infty(\rN; H^s(B))$. Let $W=V-E(b)$, where $E(b)(.,t)$ is the harmonic extension of $b(.,t)$ to $\overline{\dB}$. We arrive to \begin{eqnarray}\label{E:WWave} \left\{\begin{array}{l} W_{tt}(x,t) - \frac{(n-1)^2}{4}W(x,t)-\Delta W(x,t) =- P(x,t), (x,t) \in \dB \times \rN, \\ W(x,t)=0,~ x \in \dS, \\ W(x,-2R)=W_t(x,-2R)=0,~ x \in \dB. \end{array}  \right. \end{eqnarray} Here, $$P(x,t) = \partial_t^2 E(b)(x,t)-\frac{(n-1)^2}{4}E(b)(x,t)= E\left(b_{tt}-\frac{(n-1)^2}{4}b\right)(x,t)$$ is the harmonic extension of $b_{tt}-\frac{(n-1)^2}{4}b$. 
We now expand the functions $W$ and $P$ in terms of the orthonormal basis $\{\varphi_k\}_k$:
\begin{eqnarray*}W(x,t) =\sum_{k} \og_k(t) \varphi_{k}(x), \quad P(x,t) =  \sum_{k} p_k(t) \varphi_{k}(x).\end{eqnarray*} 
Equation (\ref{E:WWave}) reduces to  \begin{eqnarray*}\left\{ \begin{array}{l}  \og''_k(t)-\frac{(n-1)^2}{4}\og_k(t) + \frac{(n-1)^2+ 4 \llg_k^2}{4} \og_k(t) = - p_k(t), \\ \og_k(-2R)=\og'_k(-2R)=0. \end{array} \right.\end{eqnarray*} Here, $-\frac{(n-1)^2+4 \llg^2_k}{4}$ is the eigenvalue corresponding to $\varphi_k$. The above equation is equivalent to: 
\begin{eqnarray}\label{E:SODE} \left\{ \begin{array}{l}  \og''_k(t) + \llg_k^2 \og_k(t) = - p_k(t), \\ \og_k(-2R)=\og'_k(-2R)=0. \end{array} \right.\end{eqnarray}
We notice that the function $p_k$ is determined by $$p_k(t) = \int\limits_B P(x,t) \varphi_k(x)=\frac{-4}{(n-1)^2+ 4 \llg_k^2} \int \limits_B P(x,t) \Delta \varphi_k(x) dx .$$
Taking integration by parts, we obtain \begin{eqnarray*} p_k(t) = \frac{-4}{(n-1)^2+ 4 \llg_k^2} \int\limits_S P(x,t) \partial_\nu \varphi_k(x) d\sg(x).\end{eqnarray*}
Since $P(x,t)$ is the (harmonic) extension of $b_{tt}(x,t)-\frac{(n-1)^2}{4}b$:  \begin{eqnarray*} p_k(t) =  \frac{-4}{(n-1)^2+ 4 \llg_k^2} \int\limits_{\dS} \left(b_{tt}(x,t)-\frac{(n-1)^2}{4} b(x,t)\right) \partial_\nu \varphi_k(x) d\sg(x).\end{eqnarray*}
Taking the Fourier transform, we derive  \begin{eqnarray*} \widetilde{p}_k(\llg) =  \frac{(n-1)^2+ 4 \llg^2}{(n-1)^2+ 4 \llg_k^2} \int\limits_{\dS} \widetilde{b}(x,\llg) \partial_\nu \varphi_k(x) d\sg(x).\end{eqnarray*}
Since $g=T(b)$, we arrive to \begin{eqnarray*} \widetilde{p}_k(\llg) =  \frac{(n-1)^2+ 4 \llg^2}{(n-1)^2+ 4 \llg_k^2} \int\limits_{\dS} \widehat{g}(x,\llg) \partial_\nu \varphi_k(x) d\sg(x).\end{eqnarray*}
Due to (\ref{E:OBF}), $\widetilde{p}_k(\llg_k)=0$. Therefore, $$\frac{\widetilde{p}_k(\llg)}{\llg^2-\llg^2_k} \in PW_{2R}(\cN). $$
 Moreover, it is an even function (since $\widetilde{p}_k$ is). Due to the Paley-Wiener theory, there is a function $\gamma \in \mC_{2R}(\cN)$ such that $$\widetilde{\gamma}(\llg)= \frac{\widetilde{p}_k(\llg)}{\llg^2-\llg^2_k}.$$
Taking the inverse Fourier transform, we observe that $\gamma$ solves the equation (\ref{E:SODE}). Since equation (\ref{E:SODE}) has a unique solution, one obtains $\og_k =\gamma$. This implies $\og_k \in \mC_{2R}(\cN)$ for all $k$. Incorporating this with the fact that $W \in C^\infty(\rN; H^s(B))$, we obtain $W \in C_{2R}(H^s(B))$ for any $s>0$. Recalling that $b \in \mC_{2R}(H^s(S))$, we derive $E(b) \in \mC_{2R}(H^s(B))$. Therefore, $V=W+E(b) \in \mC_{2R} (H^s(B))$. This finishes our proof.
\end{proof}

\subsection{Vanishing at the boundary}
Let $U \in C^\infty(\overline{\dB} \times \overline{\rN}_+)$ be the solution of equation (\ref{E:TimeR}) obtained in the previous section. Let $f^*=U(.,0)$, then: 
\begin{eqnarray} \label{E:FW}\left\{ \begin{array}{l} [\partial_r^2 + (n-1)\coth(r) \partial_r - \Delta] U(x,r)=0,~(x,t) \in \overline{\dB} \times \overline{\rN}_+, \\ U(x,r)=g(x,r),~ (x, r) \in \dS \times \overline{\rN}_+,\\ U(x,0)=f^*(x),~ U_r(x,0)=0,~x \in \overline{\dB},
\end{array} \right.
\end{eqnarray} 
We also recall here that $U(x,r)=0$ for $r \geq 2R$. 
We now prove that:
\begin{theorem} \label{T:Sup}
The function $f^*$ vanishes up to infinite order on $\dS$.
\end{theorem}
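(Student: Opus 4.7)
My plan is to show that $U$ extends to a $C^\infty$ function $\mathcal{U}$ on $\Hs^n\times\overline{\rN}_+$ satisfying the Darboux equation globally and with $\mathcal{U}(\cdot,0)=0$ outside $\overline{\dB}$. Granting this, $f^*=\mathcal{U}(\cdot,0)|_{\overline{\dB}}$ is the restriction of a smooth function on $\Hs^n$ that vanishes identically outside $\overline{\dB}$, so $f^*$ must vanish to infinite order on $\dS$, which is the theorem.

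The extension is built via an exterior companion. Let $U_\mathrm{ext}$ be the unique smooth solution on $(\Hs^n\setminus\overline{\dB})\times\overline{\rN}_+$ of the Darboux equation with $U_\mathrm{ext}|_\dS=g$ and trivial initial data $U_\mathrm{ext}(x,0)=\partial_rU_\mathrm{ext}(x,0)=0$ for $x\notin\overline{\dB}$. Because $g$ vanishes to infinite order at $r=0$, all compatibility conditions at the corner $\dS\times\{0\}$ hold to all orders, and an argument paralleling the proof of \reftheo{T:ST1} (transforming via $T$ to an exterior Klein--Gordon wave problem, solving by standard hyperbolic theory, and transforming back) produces a smooth $U_\mathrm{ext}$ up to $\dS$. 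Gluing $U$ and $U_\mathrm{ext}$ defines $\mathcal{U}$, which is continuous across $\dS\times\overline{\rN}_+$ with matching tangential derivatives on $\dS$ (both pieces equal $g$ there).

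The crux is upgrading this continuous gluing to $C^\infty$-smoothness across $\dS\times\overline{\rN}_+$. Using the Darboux equation recursively to express any normal derivative of order $\geq 2$ of $\mathcal{U}$ in terms of lower-order normal, tangential, and $r$-derivatives, smoothness reduces to the single identity $\partial_\nu U|_\dS=\partial_\nu U_\mathrm{ext}|_\dS$. Writing the jump as $J:=\partial_\nu U-\partial_\nu U_\mathrm{ext}$, the distributional defect of the global Darboux equation for $\mathcal{U}$ equals (up to sign) $J\,\delta_\dS$, so pairing this defect against any test function $\psi$ satisfying the Darboux equation classically on $\Hs^n\times\overline{\rN}_+$ produces the identity
\begin{equation*}
\int_{\rN_+}\int_\dS J(x,r)\,\psi(x,r)\sinh^{n-1}(r)\,d\sigma(x)\,dr=0,
\end{equation*}
provided the boundary contributions at $r\to\infty$ are controlled (which can be arranged by a cutoff argument exploiting finite propagation speed). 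The orthogonality condition \mref{E:OrthS} fits into this framework when $\psi$ is built from the radial modes $h_{\lambda_k}(r)$ together with the Dirichlet eigenfunctions $\varphi_k$ suitably extended to $\Hs^n$. The main obstacle of the proof is to organize a sufficiently rich collection of such globally defined classical $\psi$---pairing the $\{h_{\lambda_k}\}$ with appropriate global eigenfunctions of $\Delta$ on $\Hs^n$---so that the resulting family of vanishing moments on $J$ forces $J\equiv 0$ via a density/completeness argument.
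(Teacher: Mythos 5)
Your strategy is genuinely different from the paper's, but it contains a gap that is essentially the whole theorem. The paper proves \reftheo{T:Sup} by expanding $U$ and $f^*$ in spherical harmonics, deriving $[\mD_m^l F_m](R)=0$ from the vanishing of $g$ to infinite order at $r=0$ (\reflemm{L:D_m}), deriving $[d_s^l\mQ_m F_m](R)=0$ from the support condition $U\equiv 0$ for $r\ge 2R$ via domain of dependence and the intertwining identity $\Gamma_k\mD_k=\mD_{k-1}\Gamma_k$ (\reflemm{L:Q_m}), and then proving a linear-independence statement (\reflemm{L:Ind}) that converts these two families of conditions into $F_m^{(k)}(R)=0$ for all $k$. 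Notably, the orthogonality condition is not used at this stage at all: it was already spent in \reftheo{T:ST1} to produce the smooth internal solution $U$, and the boundary flatness is extracted purely from the support conditions on $g$.

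Your reduction to the smoothness of a glued global solution is a legitimate reformulation (it is the ``global extendibility'' point of view of \cite{ANg}), and the recursion reducing smoothness across $\dS\times\overline{\rN}_+$ to the matching of first normal derivatives is fine. But the identity $J\equiv 0$ is exactly equivalent to the theorem, and your argument for it is not a proof. First, you explicitly leave open the completeness/density step, namely that the vanishing of $\int J\psi\sinh^{n-1}(r)$ over your family of global classical solutions $\psi$ forces $J\equiv 0$; that is the entire difficulty. Second, the Dirichlet eigenfunctions $\varphi_k$ of $\dB$ do not extend to global classical solutions of the Darboux equation on $\Hs^n$, so the orthogonality condition \mref{E:OrthS} does not ``fit into this framework'' in any evident way. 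Third, the pairing itself is problematic: there is no strong Huygens principle here, so the exterior solution $U_{\mathrm{ext}}$ need not vanish near $\dS$ in finite time even though $g$ is supported in $r\le 2R$; hence $J$ need not be compactly supported in $r$, while the weight $\sinh^{n-1}(r)$ grows exponentially. Finite propagation speed controls spatial supports at fixed $r$ but not temporal decay at $\dS$, so the cutoff you invoke does not control the boundary contributions as $r\to\infty$. To close the argument you would need something playing the role of the paper's \reflemm{L:D_m}--\reflemm{L:Ind} machinery, which obtains the boundary flatness directly from the two support conditions rather than from a completeness claim.
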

Let us look at the spherical harmonics expansion of $U$ and $f^*$: 
$$U(x,r) = \sum_{m=0}^\infty \sum_{i=1}^{i_m} u_{m,i}(|x|,r)|x|^m Y^m_i(\theta),~ f^*(x) = \sum_{m=0}^\infty \sum_{i=1}^{i_m} f_{m,i}(|x|)|x|^m Y^m_i(\theta),$$ where $\theta=\frac{x}{|x|}$ and $\{Y_i^m\}_{i=1}^{i_m}$ is an orthonormal basis of spherical harmonics of degree $m$. Here, $u_{m,i}(.,r),f_{m,i}$ extend to smooth even function on $\rN$.
Let $s=d_{\Hs^n}(x,0)$, direct calculation shows $|x| = \tanh\left(\frac{s}{2}\right)$. Setting \begin{eqnarray} \label{E:UF} \left. \begin{array}{l} U_{m,i}(s) = u_{m,i}(|x|,r)|x|^m = u_{m,i}\left(\tanh\left(\frac{s}{2}\right),r\right) \left[\tanh\left(\frac{s}{2}\right)\right]^m, \\F_{m,i}(s) = f_{m,i}(|x|)|x|^m = f_{m,i}\left(\tanh\left(\frac{s}{2}\right)\right)\left[ \tanh\left(\frac{s}{2}\right)\right]^m, \end{array}\right. \end{eqnarray} we arrive to
$$U(x,r) = \sum_{m=0}^\infty \sum_{i=1}^{i_m} U_{m,i}(s,r) Y^m_i(\theta),~ f^*(x) = \sum_{m=0}^\infty \sum_{i=1}^{i_m} F_{m,i}(s) Y^m_i(\theta).$$
It suffices to prove that $F_{m,i}$ vanishes up to infinite order at $s=R$ for all $m,i$. For the sake of simplicity, we will drop the irrelevant index $i$ of $U_{m,i}$ and $F_{m,i}$, when not needed. Let us consider the operator $$\mD_{m,s} = d_s^2 +(n-1) \coth(s) d_s - \frac{m(m+n-2)}{\sinh^2 s}.$$ We will also drop the second index of $\mD_{m,s}$ when there is no ambiguity about the variable.  Due to the polar coordinate formula (\ref{E:Polar}) of $\Delta$, we obtain $$\Delta [\ag(s) Y^m(\theta)] = (\mD_m \ag)(s) Y^m(\theta).$$
Here, as above, $s=d_{\Hs^n}(x,0)$ and $\theta=\frac{x}{|x|}$.
From equation (\ref{E:FW}), we deduce that $U_m(s,r)$ satisfies:
\begin{eqnarray} \label{E:FWdec} \left\{ \begin{array}{l} (\mD_{0,r} - \mD_{m,s}) U_m(s,r)=0,~ (s,r) \in  [0,R] \times \rN_+, \\ U_m(R,r) =g_m(r),~r \in \rN_+,\\
U_m(s,0)=F_m(s),~ \partial_r U_m(s,0)=0,~s \in [0,R]. \end{array} \right.
\end{eqnarray}

\begin{lemma}\label{L:D_m}
For any $l \geq 0$, one has \begin{equation}\label{E:D_m}\left[\mD_m^l F_m\right](R) =0.\end{equation}
\end{lemma}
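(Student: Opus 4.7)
The plan is to use the PDE $(\mD_{0,r}-\mD_{m,s})U_m=0$ to trade the action of $\mD_m$ in the $s$ variable for that of $\mD_{0,r}$ in the $r$ variable, and then exploit the infinite-order flatness of $g$ at $r=0$. Since $s$ and $r$ are independent, the operators $\mD_{m,s}$ and $\mD_{0,r}$ commute on smooth functions of $(s,r)$, so the equation iterates to
\begin{equation*}
\mD_{m,s}^l\, U_m(s,r) \;=\; \mD_{0,r}^l\, U_m(s,r),\qquad (s,r)\in[0,R]\times\rN_+,
\end{equation*}
for every $l\ge0$. Specializing to the corner $(s,r)=(R,0)$ and invoking the initial/lateral data $U_m(s,0)=F_m(s)$ and $U_m(R,r)=g_m(r)$, the left-hand side is exactly $[\mD_m^l F_m](R)$ while the right-hand side is $[\mD_{0,r}^l g_m](0)$. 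Thus the lemma reduces to showing $[\mD_{0,r}^l g_m](0)=0$ for every $l\ge 0$.

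For this, I would invoke the smoothness \& support hypothesis on $g$. Since $g\in C_0^\infty(\dS\times[0,2R])$ vanishes to infinite order at $r=0$, taking the spherical-harmonic coefficient against $Y^m_i$ preserves this property, so $g_m^{(k)}(0)=0$ for every $k\ge 0$. The apparent $1/r$ singularity in $\mD_{0,r}=d_r^2+(n-1)\coth(r)\,d_r$ is then harmless: writing $\coth r = 1/r + O(r)$ near $0$ and using $g_m(r)=O(r^N)$ for every $N$, one gets $\coth(r)g_m'(r)=O(r^{N-2})$, so $\mD_{0,r}g_m$ is smooth on $[0,2R]$ and again vanishes to infinite order at $r=0$. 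A short induction then yields $\mD_{0,r}^l g_m\in C^\infty([0,2R])$ with infinite-order vanishing at $r=0$, which in particular gives $[\mD_{0,r}^l g_m](0)=0$.

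The only real obstacle is bookkeeping around the coefficient singularities: one must check that $\mD_{0,r}$ preserves the class of functions flat to infinite order at $0$ (handled by the Taylor estimate above), and that the iterated identity makes sense at $(s,r)=(R,0)$. The latter is immediate because $U\in C^\infty(\overline{\dB}\times\overline{\rN}_+)$ renders each $U_m$ smooth on $[0,R]\times\overline{\rN}_+$, the two operators act on independent variables (so commute by Clairaut), and the evaluation point $s=R$ is far from the singular locus $s=0$ of $\mD_{m,s}$. The initial condition $\partial_r U_m(s,0)=0$ is consistent with extending $U_m$ evenly in $r$, which is exactly what is needed for the purely $r$-derivatives produced by $\mD_{0,r}^l$ to be well-defined at $r=0$.
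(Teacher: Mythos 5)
Your proposal is correct and follows essentially the same route as the paper: iterate $(\mD_{0,r}-\mD_{m,s})U_m=0$ to get $\mD_{m,s}^l U_m=\mD_{0,r}^l U_m$, evaluate at $(s,r)=(R,0)$, and use the infinite-order vanishing of $g_m$ at $r=0$. Your extra care with the $\coth(r)$ singularity at $r=0$ is a detail the paper leaves implicit, but the argument is the same.
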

\begin{proof}
By iterating equation (\ref{E:FWdec}) $l$ times, we arrive to $$\mD^l_{m,s}U_m(s,r) = \mD^l_{0,r} U_m(s,r).$$
At $(s,r)=(R,0)$, we obtain $$\mD^l_{m}F_m(R) = \mD^l_{0} g_m(0).$$
Since $g_m(r)$ vanishes up to infinite order at $r=0$, the above equation gives $\mD^l_{m} F_m(R)=0$.
\end{proof}

\begin{lemma}\label{L:Q_m} Let $\Gamma_{k} = d_s + (n+k-2) \coth(s)$ and $\mQ_m = \Gamma_1....\Gamma_m.$ Then, \begin{equation} \label{E:Q_m}\left[d_s^l \mQ_m F_m\right](R) =0, \quad \forall~ l \geq 0.\end{equation}
\end{lemma}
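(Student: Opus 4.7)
The plan is to use an intertwining relation between $\mQ_m$ and $\mD_m$ to reduce Lemma~\ref{L:Q_m} to the $m=0$ case of the Darboux equation, and then exploit the compact support of $U$ in $r$ together with a finite-speed-of-propagation argument.

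I would first verify by direct computation the intertwining
\[
\Gamma_m \mD_m = \mD_{m-1} \Gamma_m.
\]
Expanding both sides as third-order differential operators in $d_s$ and matching the coefficients of $d_s^j$ ($j=0,1,2,3$) using $d_s(\coth s) = -\sinh^{-2}(s)$ and $d_s(\sinh^{-2}(s)) = -2\cosh(s)/\sinh^3(s)$, the two sides agree term by term. Iterating yields $\mQ_m \mD_m = \mD_0 \mQ_m$. Now define $V_m(s,r) := \mQ_m U_m(s,r)$, with $\mQ_m$ acting in the $s$-variable. Applying $\mQ_m$ to equation~(\ref{E:FWdec}) and using the intertwining,
\[
(\mD_{0,r} - \mD_{0,s}) V_m = 0 \quad \text{on } [0,R] \times \rN_+,
\]
with $V_m(s,0) = \mQ_m F_m(s) =: H(s)$, $\partial_r V_m(s,0) = 0$, and $V_m(s,r) = 0$ for $r \geq 2R$ inherited from the support of $U$. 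Thus $V_m$ solves the $m=0$ analogue of equation~(\ref{E:FWdec}).

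With $V_m$ recognized as a solution of the $m=0$ Darboux problem with compactly supported data, the desired boundary vanishing of $H$ should follow from the same structural argument used for spherical means of smooth radial functions on $\Hs^n$. Concretely, I would apply the transform $T$ from Section~\ref{S:Proof} in the $r$-variable, setting $W(s,t) := T^{-1}(V_m(s,\cdot))(t)$. Since $T$ conjugates $\mD_{0,r}$ to $\partial_t^2 - (n-1)^2/4$, the function $W$ satisfies the genuine wave equation
\[
\partial_t^2 W = \mD_{0,s} W + \tfrac{(n-1)^2}{4} W
\]
on $[0,R] \times \rN$, and $W$ is compactly supported in $t \in [-2R, 2R]$ by the Paley-Wiener property of $T$. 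Finite speed of propagation for this wave equation, combined with the vanishing of $W$ outside $|t| \leq 2R$, then produces the infinite-order vanishing of $W(\cdot, 0)$, and hence of $H = \mQ_m F_m$, at $s = R$.

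The main obstacle is executing this last step rigorously: one has to convert ``compact support of $W$ in $t$'' into ``infinite-order vanishing of $H$ at $s = R$''. The natural route is a backward Cauchy / domain-of-dependence argument parallel to the one carried out in the proof of Theorem~\ref{T:ST1}, now for the wave equation satisfied by $W$ instead of for the time-reversed problem satisfied by $U$. Lemma~\ref{L:D_m}, combined with the intertwining $\mQ_m \mD_m = \mD_0 \mQ_m$, supplies the necessary compatibility conditions $\mD_0^l H(R) = 0$ for all $l \geq 0$, which together with the finite-speed argument pin down the vanishing $d_s^l \mQ_m F_m(R) = 0$ for every $l$.
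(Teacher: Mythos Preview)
Your proposal shares the paper's starting point: the intertwining $\Gamma_k \mD_k = \mD_{k-1}\Gamma_k$ is exactly Proposition~\ref{P:Com}, and applying it iteratively does show that $V_m := \mQ_m U_m$ solves the symmetric equation $\mD_{0,s}V_m = \mD_{0,r}V_m$. From this point on, however, the paper's argument and yours diverge, and yours has a real gap. The paper combines two facts you do not use. First, a domain-of-dependence argument for the \emph{full} problem~(\ref{E:FW})---not merely the support condition at $r\ge 2R$---gives the sharper vanishing $U_m(s,r)=0$ for $r-s\ge R$, hence $V_m(0,r)=0$ for all $r\ge R$. Second, and decisively, the paper invokes the symmetry of solutions to the $m=0$ Darboux equation (from \cite{HelGeo}): $V_m(0,r)=V_m(r,0)=\mQ_m F_m(r)$ on $[0,R]$. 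Together these say that $\mQ_m F_m$ on $[0,R]$ is the restriction of the smooth function $r\mapsto V_m(0,r)$, which vanishes identically for $r\ge R$; hence all its derivatives vanish at $R$.

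Your alternative route via the transform $T$ and a finite-speed argument does not close as written. The map $T$ is defined by matching the Fourier and Fourier--Legendre transforms and is not a pointwise map in the time variable, so $W(s,0)$ is not $H(s)=V_m(s,0)$; showing that $W(\cdot,0)$ vanishes at $s=R$ would not directly yield the same for $H$. More seriously, knowing only that $W(s,t)=0$ for $|t|\ge 2R$ gives you no handle at the lateral boundary $s=R$, where you have no boundary data to drive a backward-Cauchy argument; the analogy with Theorem~\ref{T:ST1} breaks down because there the orthogonality condition supplied exactly the missing information. Finally, the compatibility relations $\mD_0^l H(R)=0$ obtained from Lemma~\ref{L:D_m} via the intertwining are only half of what is needed: any spherical function $h_\lambda$ with $h_\lambda(R)=0$ but $h_\lambda'(R)\neq 0$ satisfies all of them, so they cannot by themselves force $d_s^l H(R)=0$. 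What supplies the other half in the paper is precisely the symmetry identity $V_m(0,r)=V_m(r,0)$.
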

An analog of the above identity for spherical mean transform on Euclidean spaces was derived in \cite{ANg} (see \cite[Lemma 4.1, i)]{ANg}). The derivation relied on a projection formula,  obtained in \cite{Ep}, for spherical harmonics. We present here a different approach to prove Lemma \ref{L:Q_m}. The idea is to apply an operator to (\ref{E:FWdec}) so that we obtain a symmetric equation, which was investigated in \cite{HelGeo}. Let us state the following identity, whose proof is provided in Appendix:

\begin{prop} \label{P:Com} 
We have the following identity \begin{eqnarray*} \Gamma_k \mD_k = \mD_{k-1}\Gamma_k.
\end{eqnarray*}
\end{prop}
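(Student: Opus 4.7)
The plan is to establish Proposition~\ref{P:Com} by a direct expansion of both compositions as third order differential operators in $s$ and then matching coefficients. Writing $a=n+k-2$ for brevity, so that $\Gamma_k = d_s + a\coth s$ and $\mD_k = d_s^2 + (n-1)\coth s\, d_s - ka/\sinh^2 s$, the only differential identities I shall need are $(\coth s)' = -1/\sinh^2 s$ and $(1/\sinh^2 s)' = -2\coth s/\sinh^2 s$.

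Applied to a smooth test function $f$, the left hand side $\Gamma_k\mD_k f$ expands, after collecting by order, to
\begin{equation*}
f''' + (n-1+a)\coth s\, f'' + \left[a(n-1)\coth^2 s - \frac{n-1+ka}{\sinh^2 s}\right]f' + \frac{ka(2-a)\coth s}{\sinh^2 s}\,f,
\end{equation*}
while the right hand side $\mD_{k-1}\Gamma_k f$ produces the analogous third order expression in which the zero order weight of $\mD$ is replaced by $(k-1)(k+n-3)$. The cubic and quadratic coefficients agree at sight. For the first order coefficient the comparison reduces to verifying the polynomial identity $n-1+k(n+k-2) = 2a + (k-1)(k+n-3)$, and for the zero order coefficient to simplifying each side to $-k(n+k-2)(n+k-4)\coth s/\sinh^2 s$. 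Both reductions are immediate expansions in $k$ and $n$.

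Since the whole argument is a bookkeeping exercise, I do not anticipate a genuine obstacle. The one spot where a sign error is easy to commit is the zero order coefficient of $\mD_{k-1}\Gamma_k f$, where one must differentiate the $\coth s$ factor produced by $\Gamma_k$ in addition to the $1/\sinh^2 s$ factor appearing in $\mD_{k-1}$; keeping the two indices $k$ and $k-1$ carefully distinct throughout is essential. A more conceptual route, which I shall not pursue here, is to recognise $\Gamma_k$ (up to normalisation) as a ladder operator relating the radial parts $\mD_k$ and $\mD_{k-1}$ of the Laplace--Beltrami operator on spherical harmonic components of consecutive degrees; the identity would then follow formally from the polar decomposition~\eqref{E:Polar}, but the direct computational proof outlined above is shorter and self-contained.
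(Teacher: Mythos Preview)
Your proposal is correct and follows essentially the same approach as the paper: a direct expansion of both sides as differential operators and term-by-term comparison. The paper organises the computation by splitting $\Gamma_k = d_s + (n+k-2)\coth s$ into its two summands and showing $A-C=D-B$, whereas you collect by order of derivative using the abbreviation $a=n+k-2$; the substance is identical.
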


\begin{proof}[Proof of Lemma \ref{L:Q_m}]
Due to equation (\ref{E:FW}) and the fact that $U(x,r)=U_r(x,r)=0$ for all $x\in \overline{B}$ and $r \geq 2R$, the domain of dependence argument implies $$U(x,r)=0, \mbox{ for all $(x,r)$ such that } r-d_{\Hs^n}(x,0) \geq R.$$ This gives \begin{equation}\label{E:0R} U_m(s,r)=0,\mbox{ for all }(r,s) \mbox{ such that } r-s \geq R.\end{equation}
We recall from equation (\ref{E:FWdec}):  $$\mD_{m,s}U_m(s,r) = \mD_{0,r} U_m(s,r).$$
Applying $\Gamma_m$ (with respect to variable $s$) to this equation, we obtain $$\Gamma_m \mD_{m,s}U_m(s,r) = \Gamma_m \mD_{0,r} U_m(s,r).$$
Due to Proposition \ref{P:Com}, we arrive to  $$\mD_{m-1,s} \Gamma_mU_m(s,r) =  \mD_{0,r} \Gamma_m U_m(s,r).$$
Therefore, applying $\Gamma_{m-1},..,\Gamma_{1}$ and using the same argument as above, we obtain \begin{eqnarray*} \mD_{0,s} [\mQ_m U_m](s,r) =  \mD_{0,r} [\mQ_m U_m](s,r).\end{eqnarray*}
Recalling that $\mD_{0,s}= \frac{d^2}{ds^2}+(n-1) \coth(s) \frac{d}{ds}$, the above symmetric equation for $[\mQ_m U_m](s,r)$ was investigated in \cite{HelGeo}. Although the operator $\mQ_m$ is singular at $s=0$, due to the formula (\ref{E:UF}) of $U_m=U_{m,i}$, the function $[\mQ_m U_m] (s,r)$ is smooth on $[0,R] \times \overline{\rN}_+$. From \cite[p. 320-322]{HelGeo}, we obtain \begin{equation}\label{E:mQF}  [\mQ_m U_m](0,s)=[\mQ_m U_m](s,0)=\mQ_m F_m(s), ~ s \in [0,R].\end{equation}
Due to (\ref{E:0R}) and the fact that $\mQ_m$ is a local operator, we obtain $$[\mQ_m U_m](s,r) = 0, \mbox{ for all }(s,t) \mbox{ such that } r-s \geq R.$$
This in particular implies $d_s^l [\mQ_m U_m] (0,s)|_{s=R}=0$. Combining this and (\ref{E:mQF}), we conclude:
$$[d_s^l \mQ_m F_m](R) =0.$$ This finishes the proof.
\end{proof}

The main ingredient for proof of Theorem \ref{T:Sup} is the following lemma: 
\begin{lemma}\label{L:Ind}
Let $F_m$ be a function defined on a neighborhood of $s=R$ such that for all $l=0,..,m-1$:  $$[d_s^l \mQ_m F_m](R) = [\mD_m^l F_m](R) =0. $$ Then, $F_m^{(k)}(R)=0$, for all $k=0,..,2m-1$.
\end{lemma}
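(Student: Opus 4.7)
The plan is to turn the $2m$ linear hypotheses on the Taylor jet $(F_m(R), F_m'(R), \ldots, F_m^{(2m-1)}(R))$ into a spectral problem on polynomials. Stage one strips off a particular inhomogeneous solution. Setting $G := \mQ_m F_m$, the $\mQ_m$-hypotheses say $G$ vanishes to order $m$ at $R$. Since $\mQ_m$ is a regular $m$-th order ODE at $R > 0$, solving $\mQ_m u_p = G$ with $u_p^{(j)}(R) = 0$ for $j = 0, \ldots, m-1$ and then successively differentiating the ODE and evaluating at $R$ yields, by induction on $l$, $u_p^{(m+l)}(R) = G^{(l)}(R) = 0$ for $l = 0, \ldots, m-1$. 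Thus $u_p = O((s-R)^{2m})$. Setting $K := F_m - u_p$, we have $\mQ_m K = 0$ near $R$ and, because $\mD_m^l u_p(R) = 0$ for $l = 0, \ldots, m-1$ as well, the $\mD_m$-hypotheses transfer: $\mD_m^l K(R) = 0$ for $l = 0, \ldots, m-1$. The lemma reduces to showing that any such $K \in \ker \mQ_m$ vanishes identically.

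Stage two parametrizes $\ker \mQ_m$ and transforms $\mD_m$ accordingly. From $\Gamma_k = \sinh^{-(n+k-2)}(s)\, d_s\,\sinh^{n+k-2}(s)$ and the identity $\sinh^{-1}(s)\, d_s = d_\xi$ with $\xi = \cosh s$, the product telescopes to
\begin{equation*}
\mQ_m \;=\; \sinh^{-(n-2)}(s)\, d_\xi^{\,m}\, \sinh^{n+m-2}(s),
\end{equation*}
so near $R$, $K \in \ker \mQ_m$ iff $K(s) = \sinh^{-(n+m-2)}(s)\, P(\cosh s)$ for some polynomial $P$ with $\deg P \leq m-1$. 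A direct Leibniz calculation gives
\begin{equation*}
\mD_m\bigl[\sinh^{-(n+m-2)}(s)\, P(\cosh s)\bigr] \;=\; \sinh^{-(n+m-2)}(s)\,(\mathcal{E} P)(\cosh s),
\end{equation*}
where $\mathcal{E} P := (\xi^2 - 1)\,P''(\xi) - (n + 2m - 4)\,\xi\,P'(\xi) + (n+m-2)(m-1)\,P(\xi)$. Iterating, the reduced problem becomes: if $P$ has $\deg P \leq m-1$ and $\mathcal{E}^l P(\xi_0) = 0$ for $l = 0, \ldots, m-1$ (with $\xi_0 := \cosh R > 1$), then $P \equiv 0$.

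Stage three diagonalizes $\mathcal{E}$ on the $m$-dimensional space $V_m$ of such polynomials and concludes via Vandermonde. In the monomial basis, $\mathcal{E}\xi^i = \lambda_i \xi^i - i(i-1)\xi^{i-2}$ with $\lambda_i := (i-(m-1))(i-(n+m-2))$; these eigenvalues are pairwise distinct for $i = 0, \ldots, m-1$ (strictly decreasing in $i$ on this range for every $n \geq 1$), so $\mathcal{E}$ is diagonalizable on $V_m$ with an eigenbasis $\{p_0, \ldots, p_{m-1}\}$ of polynomial eigenvectors of respective degrees $0, 1, \ldots, m-1$. Writing $P = \sum_j c_j p_j$ turns the hypotheses into the Vandermonde system $\sum_j (c_j\, p_j(\xi_0))\,\lambda_j^{\,l} = 0$ for $l = 0, \ldots, m-1$, forcing $c_j\, p_j(\xi_0) = 0$ for every $j$. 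The main remaining obstacle — verifying $p_j(\xi_0) \neq 0$ — follows from the two-term monomial recursion $p_j^{(i+2)} = \frac{(i-j)(i+j-(n+2m-3))}{(i+2)(i+1)}\, p_j^{(i)}$, obtained by matching $\xi^i$ coefficients in $\mathcal{E} p_j = \lambda_j p_j$: both numerator factors are strictly negative whenever $0 \leq i < j \leq m-1$ (since $i+j \leq 2(m-1) < n+2m-3$ for $n \geq 1$), while the same matching at odd/even index differences shows all coefficients of parity opposite to $j$ must vanish. Hence every nonzero monomial coefficient of $p_j$ has the same sign, so $p_j(\xi) > 0$ for all $\xi > 0$, and in particular $p_j(\xi_0) > 0$. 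Therefore $c_j = 0$ for every $j$, giving $P \equiv 0$, $K \equiv 0$, and $F_m = u_p = O((s-R)^{2m})$, so $F_m^{(k)}(R) = 0$ for $k = 0, 1, \ldots, 2m-1$.
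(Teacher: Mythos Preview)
Your proof is correct and reaches the same destination as the paper's, but the route is organized differently enough to merit comment.

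The paper proves the contrapositive dual: it shows the $2m$ linear functionals $F_m \mapsto [d_s^l\mQ_m F_m](R)$ and $F_m \mapsto [\mD_m^l F_m](R)$ are linearly independent on $2m$-jets, by testing a putative dependence relation against the explicit functions $u_i(s)=\cosh^i(s)\sinh^{-(n+m-2)}(s)$. Using $\mQ_m u_i=0$ and the recursion $(\mD_m-\kappa_i)u_i=-i(i-1)u_{i-2}$, it forces any polynomial $P_2(\mD_m)$ appearing in the relation to be divisible by $\prod_i(x-\kappa_i)$, hence zero by degree count; the remaining $A_l$ are then triangular. You instead work primally: first strip off a particular solution $u_p$ to reduce to $K\in\ker\mQ_m$, then exploit the factorization $\mQ_m=\sinh^{-(n-2)}d_\xi^m\sinh^{n+m-2}$ to identify $\ker\mQ_m$ with polynomials of degree $\le m-1$ in $\xi=\cosh s$, and diagonalize the conjugated operator $\mathcal{E}$ on that space.

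The core content is the same: your monomial basis $\xi^i$ corresponds exactly to the paper's $u_i$, your $\lambda_i=(i-(m-1))(i-(n+m-2))$ equals the paper's $\kappa_i=(m-i-1)(m+n-2-i)$, and your two-term recursion for $\mathcal{E}$ is the paper's Proposition~3.2. What differs is the packaging: your Vandermonde argument replaces the paper's inductive polynomial-divisibility argument, and your sign analysis of the eigenpolynomial coefficients replaces the paper's running positivity estimate on $\prod(\kappa_{i-2p}-\mD_m)u_i$. Your change of variable $\xi=\cosh s$ and the resulting $d_\xi^m$ factorization of $\mQ_m$ make the structure of $\ker\mQ_m$ transparent without needing the paper's separate Proposition~3.1; on the other hand, the paper's linear-independence framing avoids your preliminary Stage~1 reduction via $u_p$.
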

The proof of this lemma is the most difficult part of this paper. It will be presented in Section \ref{S:LM}. Here we use it to prove Theorem \ref{T:Sup}. 
\begin{proof}[Proof of Theorem \ref{T:Sup}] Due to Lemmas \ref{L:D_m}, \ref{L:Q_m}, and \ref{L:Ind}, we obtain  $F_m^{(k)}(R)=0$ for all $k=0,1,..,2m-1$. 
From (\ref{E:Q_m}), we deduce for any $l \geq 0$: $$F_m^{(m+l)}(R)+ \sum_{j=0}^{m+l-1} A_{l,j} F_m^{(j)} (R) =0, $$ for some coefficients $A_{l,j}$ independent of $F_m$.
Choosing $l=m,m+1,...$, by induction, we derive $F_m^{(k)}(R) =0$ for all $k \geq 2m$. Therefore, $F_m^{(k)}(R)=0$ for all $k \geq 0$. We recall here that $F_m$ stands for $F_{m,i}$ for all $i=1,..,i_m$. Due to the uniform convergence, we conclude $$f^*(x)=\sum_{m=0}^\infty \sum_{i=1}^{i_m} F_{m,i}(s) Y_i^m(\theta)$$ vanishes up to infinite order at $x \in \dS =S_R(0)$.  
\end{proof}

\subsection{Finishing the proof of Theorem \ref{T:MainH}}
Extending $f^*$ by zero outside $\dB$ to a function $f$, we obtain $f \in C_0^\infty(\Hs^n)$. We now prove that $g=\mR_{\dS}(f)$. Indeed, let $G:= \mR(f)  \in C^\infty(\Hs^n \times \overline{\rN}_+)$. Then, $G$ satisfies \begin{eqnarray*} \left\{ \begin{array}{l} [\partial_r^2 + (n-1) \coth(r) \partial_r - \Delta] G(x,r) =0,~ (x,t) \in \Hs^n \times \rN_+,\\  G(x,0)=f(x),~ G_r(x,0) = 0,~x \in \Hs^n. \end{array}\right. \end{eqnarray*}
We recall that the following equation
\begin{eqnarray*} \left\{ \begin{array}{l} [\partial_r^2 + (n-1)\coth(r) \partial_r - \Delta] U(x,r)=0,~(x,r) \in  \dB \times \rN_+, \\ U(x,r) =g(x,r),~(x, r) \in \dS \times \rN_+,\\
U(x,0)=f^*(x),~ U_r(x,0)=0,~ x \in \dB,
\end{array} \right.
\end{eqnarray*} has a unique solution $U \in C^\infty(\overline{\dB}\times \overline{\rN}_+)$ satisfying $U(x,r) =0$ for $r \geq 2R$.  Setting $H(x,r) = G(x,r) -U(x,r)$, we obtain:
\begin{eqnarray}\label{E:DarD} \left\{ \begin{array}{l} [\partial_r^2 + (n-1) \coth(r) \partial_r - \Delta] H(x,r) =0,~ (x,r) \in \dB \times \rN_+,\\  H(x,0)=0,~ H_r(x,0) = 0,~ x \in \overline{\dB}. \end{array}\right. 
\end{eqnarray}
The domain of dependence argument then shows that $U(x,r)=0$ in the downward cone $$\mK_-= \{(x,r):r \geq 0,~  d_{\Hs^n}(x,0)+r \leq R\}.$$
On the other hand, $f \in C^\infty_0(\overline{\dB})$ implies that $G(x,r)=0$ for all $x \in \overline{\dB}$ and $r \geq 2R$. Hence, $H$ satisfies the time-reversed equation: \begin{eqnarray*} \left\{ \begin{array}{l} [\partial_r^2 + (n-1)\coth(r)\partial_r - \Delta] H(x,r)=0,~ (x,t) \in \dB \times (0,2R],\\
H(x,2R)=0,~ H_r(x,2R)=0,~ x \in \overline{B}.
\end{array} \right.
\end{eqnarray*}
The domain of dependence argument shows that $H(x,r)=0$ in the upward cone $$\mK_+= \{(x,r): r \leq 2, r-d_{\Hs^n}(x,0) \geq R\}.$$
Therefore, $H(x,r)=0$ in $\mK_+ \cup \mK_-$, which implies $\partial^\ag_x H(0,r)=0$ for all $r \in \rN_+$. Applying Fourier-Legendre transform (for variable $r$) to  equation (\ref{E:DarD}), we obtain: $$\frac{(n-1)^2+ 4 \llg^2}{4} \widehat{H}(x,\llg)+ \Delta \widehat{H}(x,\llg) =0,~ x \in \dB.$$ 
The above equation shows that $\widehat{H}(.,\llg)$ is analytic in $\dB$ for any $\llg \in \rN$. Since $\partial^\ag_x H(0,r)=0$, we obtain $\partial^\ag_x \widehat{H}(0,\llg)=0$ for any multi-index $\alpha$. The analyticity of $\widehat{H}(.,\llg)$ then implies $\widehat{H}(x,\llg)=0$ for all $x \in \dB$.  Due to the injectivity of the Fourier-Legendre transform, we obtain $H(x,r)=0$ for all $(x,r) \in \dB \times \overline{\rN}_+$.   Now the continuity of $H$ implies $H(x,r)=0$ for all $\overline{\dB} \times \overline{\rN}_+$. This, in particular, gives $\mR_{\dS}(f)(x,r) = G(x,r) = U(x,r)=g(x,r)$ for all $(x,r) \in \dS \times \overline{\rN}_+$. Theorem \ref{T:MainH} is proved.
\section{Proof of Lemma \ref{L:Ind}} \label{S:LM}
We now prove Lemma \ref{L:Ind}, which is the cornerstone of this article. Let us first state some auxiliary results:
\begin{prop}\label{P:Q_m}
For all $i=0,..,m-1$, let $u_i(s) = \cosh^i(s) \sinh^{-n-m+2}(s)$. Then,  $$\left(\mQ_m u_i\right)(R)=0.$$
\end{prop}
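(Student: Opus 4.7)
The plan is to exploit a first-order factorization of each $\Gamma_k$ and to show, by a stepwise collapse, that $\mQ_m u_i \equiv 0$ on $(0,\infty)$---a slightly stronger statement than required, from which $\mQ_m u_i(R)=0$ is immediate. First, a direct computation yields the factorization
\[
\Gamma_k \;=\; \sinh^{-(n+k-2)}(s)\,\frac{d}{ds}\,\sinh^{n+k-2}(s),
\]
whose two useful consequences are (i) $\ker\Gamma_k$ is spanned by $\sinh^{-(n+k-2)}(s)$, and (ii) for any integer $j\ge 0$,
\[
\Gamma_k\!\left[\cosh^j(s)\sinh^{-(n+k-2)}(s)\right] \;=\; \sinh^{-(n+k-2)}(s)\,\frac{d}{ds}\cosh^j(s) \;=\; j\cosh^{j-1}(s)\sinh^{-(n+k-3)}(s).
\]
Thus $\Gamma_k$ sends a function of this special form to another function of the same form, with the $\cosh$ and $\sinh$ exponents each lowered by one.

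Starting from $u_i(s)=\cosh^i(s)\sinh^{-(n+m-2)}(s)$ and applying $\Gamma_m, \Gamma_{m-1}, \dots$ successively, the $\sinh$--exponent after $j$ steps is $n+m-2-j = n+(m-j)-2$, which is precisely the exponent needed to invoke identity (ii) when the next factor $\Gamma_{m-j}$ is applied. A routine induction on $j$ then gives, for $0 \le j \le i$,
\[
\Gamma_{m-j+1}\cdots\Gamma_m\, u_i(s) \;=\; \frac{i!}{(i-j)!}\,\cosh^{i-j}(s)\,\sinh^{-(n+m-j-2)}(s).
\]

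Setting $j=i$ leaves $i!\,\sinh^{-(n+m-i-2)}(s)$. Since $i\le m-1$ by hypothesis, we have $m-i\ge 1$, so the next factor $\Gamma_{m-i}$ is still one of the factors appearing in $\mQ_m$; by (i), its kernel is spanned by $\sinh^{-(n+(m-i)-2)}(s)=\sinh^{-(n+m-i-2)}(s)$, which is exactly the function just produced. Therefore $\Gamma_{m-i}\Gamma_{m-i+1}\cdots\Gamma_m u_i\equiv 0$, and the remaining factors $\Gamma_1\cdots\Gamma_{m-i-1}$ preserve zero, giving $\mQ_m u_i\equiv 0$ on $(0,\infty)$. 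I do not anticipate any real obstacle: the argument is essentially bookkeeping of exponents, and the one substantive ingredient is the matching of the $\sinh$--exponent of $u_i$ with the kernel structure of the $\Gamma_k$'s, which makes the hypothesis $i \le m-1$ appear naturally as the condition guaranteeing that the telescoping cancellation happens inside $\mQ_m$.
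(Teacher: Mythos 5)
Your proof is correct and is essentially the paper's own argument: both rest on the identity $\Gamma_k\bigl[\cosh^j(s)\sinh^{-(n+k-2)}(s)\bigr]=j\cosh^{j-1}(s)\sinh^{-(n+(k-1)-2)}(s)$, iterated through $\Gamma_m,\dots,\Gamma_{m-i}$ until the $\cosh$ exponent reaches zero and the next factor annihilates $\sinh^{-(n+(m-i)-2)}(s)$, with $i\le m-1$ guaranteeing that this factor still lies inside $\mQ_m$. The only cosmetic difference is that you derive the key identity from the factorization $\Gamma_k=\sinh^{-(n+k-2)}(s)\,d_s\,\sinh^{n+k-2}(s)$ rather than by direct differentiation.
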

\begin{proof} We have \begin{eqnarray*} \Gamma_k \left[\cosh^i(s) \sinh^{-n-k+2}(s)\right] &=& \left[d_s + (n+k-2)\coth(s) \right]\left[\cosh^i(s) \sinh^{-n-k+2}(s)\right]  \\ &=& i \cosh^{i-1}(s) \sinh^{-n-(k-1)+2}(s).\end{eqnarray*}
Therefore, \begin{eqnarray*} && \mQ_m \left[\cosh^i(s) \sinh^{-k-n+2}(s)\right]=\left[\prod_{j=1}^{m-i-1}\Gamma_j\right] \Gamma_{m-i}...\Gamma_m \left[\cosh^i(s) \sinh^{-n-m+2}(s)\right]  \\ &&= i \left[\prod_{j=1}^{m-i-1}\Gamma_j\right] \Gamma_{m-i}...\Gamma_{m-1}\left[ \cosh^{i-1}(s) \sinh^{-n-(m-1)+2}(s)\right] \\ && = i! \left[\prod_{j=1}^{m-i-1}\Gamma_j\right] \Gamma_{m-i}\left[ \sinh^{-n-(m-i)+2}(s)\right] =0.\end{eqnarray*}
 
\end{proof}
\begin{prop}\label{P:D_m} For all $i=0,..,m-1$:
$$(\mD_m-\kappa_i) u_i(s)= -i(i-1) u_{i-2},$$
where $\kappa_i = (m-i-1) (m+n-2-i)$.
\end{prop}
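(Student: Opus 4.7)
The statement is a direct computational identity, so the plan is to carry out the differentiations and organize the algebra rather than invoke any outside machinery. Write $u_i(s) = \cosh^{i}(s)\sinh^{\beta}(s)$ with $\beta = -n - m + 2$. Differentiating twice with the product rule, I would obtain
\begin{align*}
u_i' &= i\cosh^{i-1}\sinh^{\beta+1} + \beta\cosh^{i+1}\sinh^{\beta-1},\\
u_i'' &= i(i-1)\cosh^{i-2}\sinh^{\beta+2} + \bigl[i(\beta+1)+\beta(i+1)\bigr]\cosh^{i}\sinh^{\beta} + \beta(\beta-1)\cosh^{i+2}\sinh^{\beta-2}.
\end{align*}
Multiplying $u_i'$ by $(n-1)\coth(s)$ converts the two terms to $(n-1)i\cosh^{i}\sinh^{\beta}$ and $(n-1)\beta\cosh^{i+2}\sinh^{\beta-2}$, while the singular term contributes $-m(m+n-2)\cosh^{i}\sinh^{\beta-2}$.

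Next I would reduce the off-diagonal monomials using $\sinh^{2} = \cosh^{2}-1$ and $\cosh^{2} = 1 + \sinh^{2}$, giving
\begin{equation*}
\cosh^{i-2}\sinh^{\beta+2} = u_i - u_{i-2}, \qquad \cosh^{i+2}\sinh^{\beta-2} = u_i + \cosh^{i}\sinh^{\beta-2}.
\end{equation*}
Substituting these into the sum for $\mD_m u_i$ produces three kinds of terms: multiples of $u_i$, a multiple of $u_{i-2}$ (namely $-i(i-1)u_{i-2}$), and a residual multiple of $\cosh^{i}\sinh^{\beta-2}$ with coefficient $\beta(\beta-1)+(n-1)\beta - m(m+n-2) = \beta(\beta+n-2)-m(m+n-2)$. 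The critical step is to verify this last coefficient vanishes: since $\beta+n-2 = -m$, one has $\beta(\beta+n-2) = (-n-m+2)(-m) = m(n+m-2)$, so the residual cancels. This is the one small miracle in the computation and is precisely what forces the exponent $\beta = -n-m+2$ in the definition of $u_i$; I expect this to be the only spot where one has to pay attention, the rest being bookkeeping.

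Finally I would compute the coefficient of $u_i$ from the surviving contributions:
\begin{equation*}
\bigl[i(\beta+1)+\beta(i+1)+(n-1)i\bigr] + i(i-1) + \beta(\beta-1)+(n-1)\beta \;=\; (i+\beta)(i+\beta+n-1).
\end{equation*}
Substituting $\beta = -n-m+2$ gives $(i-n-m+2)(i-m+1) = (m-i-1)(m+n-2-i) = \kappa_i$. Combining with the $u_{i-2}$ term yields
\begin{equation*}
\mD_m u_i = \kappa_i\, u_i - i(i-1)\, u_{i-2},
\end{equation*}
which is exactly $(\mD_m - \kappa_i)u_i = -i(i-1)u_{i-2}$, completing the proof.
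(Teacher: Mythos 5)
Your computation is correct and follows essentially the same route as the paper: a direct application of the product rule to $\cosh^i(s)\sinh^{\beta}(s)$, reduction of off-diagonal monomials via $\cosh^2=1+\sinh^2$, and the observation that the exponent $\beta=-n-m+2$ makes the residual $\cosh^i\sinh^{\beta-2}$ term cancel against the singular potential, leaving the coefficient $(i+\beta)(i+\beta+n-1)=\kappa_i$. The only difference is cosmetic (the paper performs the $\cosh^2=1+\sinh^2$ reduction already at the first-derivative stage and keeps the exponent $l$ general until the end), so no further comment is needed.
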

A proof of this proposition will be provided in Appendix. We now prove Lemma \ref{L:Ind}.
\begin{proof}[Proof of Lemma \ref{L:Ind}] For $l=0,..,m-1$, we can write $$\left[d_s^l \mQ_m F_m\right](R) = \sum_{i=0}^{2m-1} A_{l,i} F_m^{(i)}(R),$$ and $$[\mD_m^l F_m](R) = \sum_{i=0}^{2m-1}B_{l,i}F_m^{(i)}(R).$$ Here $A_{l,i},B_{l,i}$ are constants independent of $F_m$. We denote by $A_l$ and $B_l$ the corresponding row vectors $A_l=(A_{l,i})_{i=0}^{2m-1}$ and $B_l=(B_{l,i})_{i=0}^{2m-1}$.  It suffices to prove that $\{A_l,B_l\}_{l=0}^{m-1}$ is linearly independent. Indeed, assume that $\{\ag_l,\bg_l\}_{l=0}^{m-1}$ is such that \begin{equation} \label{E:Dep} \sum_{l=0}^{m-1} \ag_l A_l + \sum_{l=0}^{m-1} \bg_l B_l =0.\end{equation} We now prove that $\ag_l=\bg_l=0$ for all $l=0,..,m-1$. Let $$P_1(x) = \sum_{l=0}^{m-1}\ag_l x^l,~ P_2(x) = \sum_{l=0}^{m-1} \bg_l x^l.$$
From (\ref{E:Dep}), we obtain $$ \left[P_1(d_s) \mQ_m u\right](R)+ \left[P_2(\mD_m) u\right](R)=0,$$ for any function $u$ smooth at $R$. 
Let $u=u_i$, we obtain $$\left[P_1(d_s) \mQ_m u_i \right](R)+ \left[P_2(\mD_m)u_i \right](R) =0.$$
Due to Propositions  \ref{P:Q_m}, the first term of the left hand side is zero. We, thus, obtain \begin{equation}\label{E:P2Dm} \left[P_2(\mD_m)u_i \right](R) =0,\quad \forall i=0,..,m-1.\end{equation}
We now prove that $$P_2(x)=Q(x) \prod_{i=0}^{m-1}(x-\kappa_i),$$ where $Q$ is a polynomial and $\kappa_i$ is defined in Proposition \ref{P:D_m}. Indeed,
let $i=0$ in equation (\ref{E:P2Dm}).  Due to Proposition \ref{P:D_m}, we deduce $$P_2(\kappa_0) u_0(R) =0.$$
Since $u_0(R) \neq 0$, we obtain $P_2(\kappa_0)=0$. The same argument then shows that $P_2(\kappa_1)=0$.  We now prove, by induction, that $P_2$ is divisible by $(x-\kappa_i)$ for all $2\leq i \leq m-1$. Indeed, assume that it is true for all $k \leq i-1$. We obtain $$P_2(x)=Q(x)\prod_{p=1}^{\lfloor \frac{i}{2}\rfloor}(x-\kappa_{i-2p}).$$
Here, $\lfloor{\frac{i}{2}}\rfloor$ is the the integer part of $\frac{i}{2}$, which is the biggest integer less than or equal to $\frac{i}{2}$. Let us write $Q(x) = q(x) (x-\kappa_i)+C$, where $C$ is a constant. We arrive to $$P_2(x)= q(x) \prod_{p=0}^{\lfloor \frac{i}{2} \rfloor}(x-\kappa_{i-2p})+ C\prod_{p=1}^{\lfloor \frac{i}{2} \rfloor}(x-\kappa_{i-2p}).$$ Therefore, \begin{equation}\label{E:DecP} P_2(\mD_m)u_{i} =  q(\mD_m) \prod_{p=0}^{\lfloor \frac{i}{2} \rfloor}(\mD_m-\kappa_{i-2p})u_{i}+ C\prod_{p=1}^{\lfloor \frac{i}{2} \rfloor}(\mD_m-\kappa_{i-2p})u_{i}.\end{equation}
Due to Proposition \ref{P:D_m}, we deduce \begin{eqnarray*}  \prod_{p=0}^{\lfloor \frac{i}{2} \rfloor}(\mD_m-\kappa_{i-2p})u_{i} &=&  \left[\prod_{p=1}^{\lfloor \frac{i}{2} \rfloor}(\mD_m-\kappa_{i-2p})\right] \left[(\mD_m-\kappa_{i}) u_{i}\right] \\ &=&i(i-1)\left[\prod_{p=1}^{\lfloor \frac{i}{2} \rfloor}(\mD_m-\kappa_{i-2p})\right] u_{i-2}. \end{eqnarray*}
Continuing the argument, we obtain \begin{eqnarray*} \prod_{p=0}^{\lfloor \frac{i}{2} \rfloor}(\mD_m-\kappa_{i-2p})u_{i}= \left\{\begin{array}{l} i! (\mD_m-\kappa_1) u_1, \mbox{ if $i$ is odd}, \\ i!   (\mD_m-\kappa_0) u_0, \mbox{ if $i$ is even}. \end{array} \right.  \end{eqnarray*}
Applying Proposition \ref{P:D_m} once more, we conclude 
\begin{eqnarray*} \prod_{p=0}^{\lfloor \frac{i}{2} \rfloor}(\mD_m-\kappa_{i-2p})u_{i}= 0.\end{eqnarray*}
Hence, equation (\ref{E:DecP}) gives: 
$$P_2(\mD_m)u_i = C\prod_{p=1}^{\lfloor \frac{i}{2} \rfloor} (\mD_m-\kappa_{i-2p})u_{i}= (-1)^{\lfloor \frac{i}{2} \rfloor} C\prod_{p=1}^{\lfloor \frac{i}{2} \rfloor} (\kappa_{i-2p}-\mD_m)u_i.$$
Due to (\ref{E:P2Dm}), we arrive to  \begin{equation} \label{E:Czero} C\left[\prod_{p=1}^{\lfloor \frac{i}{2} \rfloor} (\kappa_{i-2p}-\mD_m)u_i\right](R)=0.\end{equation}
Let us recall that Proposition \ref{P:D_m} gives 
$$(\kappa_j-\mD_m)u_{k} = (\kappa_j -\kappa_k) u_{k} + k(k-1) u_{k-2}.$$
Therefore,
\begin{eqnarray*} \prod_{p=1}^{\lfloor \frac{i}{2} \rfloor}(\kappa_{i-2p}-\mD_m) u_i &=& \left[\prod_{p=2}^{\lfloor \frac{i}{2} \rfloor}(\kappa_{i-2p}-\mD_m)\right] [(\kappa_{(i-2)} -\mD_m) u_i]  \\ &=& \left[\prod_{p=2}^{\lfloor \frac{i}{2} \rfloor}(\kappa_{i-2p}-\mD_m)\right] [(\kappa_{i-2} -\kappa_i) u_{i} + i(i-1) u_{i-2}] \\ &=& (\kappa_{i-2} -\kappa_{i}) \left[\prod_{p=2}^{\lfloor \frac{i}{2} \rfloor}(\kappa_{i-2p}-\mD_m)\right]  u_{i}\\ &+& i(i-1) \left[\prod_{p=2}^{\lfloor \frac{i}{2} \rfloor}(\kappa_{i-2p}-\mD_m)\right]  u_{i-2}. \end{eqnarray*} 
We notice here that $\kappa_i$ is strictly decreasing in $i=0,..,m-1$. Hence, in the last formula, the first coefficient $(\kappa_{i-2}-\kappa_i)$ is positive and the second one $i(i-1)$ is nonnegative.  Continuing the expansion for $p=2,...,\lfloor \frac{i}{2} \rfloor$, we obtain 
\begin{eqnarray*} \prod_{i=1}^{\lfloor \frac{i}{2} \rfloor}(\kappa_{i-2p}-\mD_m) u_i = \sum_{p=0}^{[\frac{i}{2}]} c_{i-2p} u_{i-2p},\end{eqnarray*} where $c_i>0$ and all other coefficients $c_k's$ are nonnegative. Since $u_j(R)>0$ for all $j=0,..,m-1$, we obtain then \begin{eqnarray*}\left[\prod_{i=1}^{\lfloor \frac{i}{2} \rfloor}(\kappa_{i-2p}-\mD_m) u_i \right](R)= \sum_{p=0}^{\lfloor \frac{i}{2}\rfloor} c_{i-2p} u_{i-2p}(R) >0. \end{eqnarray*}  From (\ref{E:Czero}), we then derive $C=0$. Thefore, $P_2$ is divisible by $(x-\kappa_i)$, for $i=0,..,m-1$.
Since $\kappa_i$ are pairwise different, we obtain the factorization $$P_2(x) = Q(x) \prod_{i=0}^{m-1} (x-\kappa_i).$$ We now recall that $P_2$ is a polynomial of degree at most $m-1$. This shows $P_2=0$, and so $\bg_l=0$ for all $i=0,..,m-1$. 
From (\ref{E:Dep}), we arrive at $$\sum_{l=0}^{m-1} \ag_l A_l=0.$$
It is easy to observe that, for each $l \geq0$, $A_{l,m+l}=1$ and $A_{l,j}=0$ for all $j>m+l$. Hence, the above equation gives $\ag_l=0$ for all $l=0,..,m-1$. This shows the linear independence of $\{A_l,B_l\}_{l=0}^{m-1}$. It also finishes the proof of the lemma.
\end{proof}

\section{The result on spherical geometry}\label{S:P}
In this section, we present the range description for the spherical mean transform on $\uS^2$. The argument is essentially the same as that on the hyperbolic space $\Hs^n$. The result for higher dimensional spaces $\uS^n$ ($n \geq 3$) might require more work. We will discuss about it in an upcoming paper.

 We recall that $\uS^2$ is the unit sphere in $\rN^{3}$. It is a Riemannian manifold with the metric induced by the Euclidean one of $\rN^3$. Let $f \in C^\infty(\uS^2)$, for each $x \in \uS^2$ and $0< r < \pi$, we define: $$\mR(f)(x,r)=\frac{1}{|S_r(x)|} \int\limits_{S_r(x)} f(y) d\sg(y),$$
where $S_r(x)$ is the sphere in $\uS^2$ of radius $r$ centered at $x$. It is easy to see that $\mR(f)$ extends smoothly to $(x,r) \in \uS^2 \times [0,\pi]$. Moreover,  $G(x,r)=\mR(f)(x,r)$ satisfies the Darboux-type equation \cite{HelGeo}:
 \begin{eqnarray}\label{E:Darbs} \left\{ \begin{array}{l} [\partial_r^2 + \cot(r) \partial_r - \Delta] G(x,r) =0, (x,r) \in \uS^2 \times (0,\pi),\\  G(x,0)=f(x),~ G_r(x,0) = 0,~ x \in \uS^2. \end{array}\right. \end{eqnarray}
Conversely, if $G(x,r) \in C^\infty(\uS^2 \times [0,\pi])$ satisfies the above equation, then $G(x,r) = \mR(f)(x,r)$. Indeed, let $U(x,r) = G(x,r) - \mR(f)(x,r)$. Then, $U$ satisfies the equation  \begin{eqnarray*}\left\{ \begin{array}{l} [\partial_r^2 +  \cot(r) -\Delta] U(x,r) =0,~ (x,r) \in \uS^2 \times (0,\pi),\\  U(x,0) = 0,~ U_r(x,0) = 0, ~ x \in \uS^2. \end{array}\right. \end{eqnarray*}
Multiplying the equation by $U_r(x,r)$ and taking integration by parts, one obtains:  $$\int\limits_{\uS^2} [ U_{rr} (x,r) U_r(x,r)+ \nabla U(x,r) \nabla U_r(x,r)] dx + \cot(r) \int\limits_{\uS^2} |U_r(x,r)|^2 dx=0.$$
Let $$E(r)= \frac{1}{2}\int\limits_{\uS^2} |U_{r} (x,r)|^2+ |\nabla U(x,r)|^2dx.$$
For any $0<\tau < \pi$, there is a positive constant $c>0$ such that $-\cot(r) \leq c$ for all $0< r <\tau$. The above identity then implies $$\frac{d E(r)}{dr} \leq 2c E(r),~0<r<\tau.$$
Applying the Gronwall's inequality, one arrives to $E(\tau) \leq E(0) e^{2 c\tau}$. Since $E(0)=0$ and $E(\tau) \geq 0$, one obtains $E(\tau)=0$ for all $\tau \in (0,\pi)$. This implies $U(x,r)=G(x,r) - \mR(f)(x,r) =0$ for all $(x,r) \in \uS^2 \times [0,\pi)$. Due to the continuity, $G(x,r)=\mR(f)(x,r)$ for all $(x,t) \in \uS^2 \times [0,\pi]$. 

Let $S$ be any sphere in $\uS^2$ of radius $0<R<\frac{\pi}{2}$ and $\mR_S$ be the restriction of $\mR(f)$ to the set of spheres centered at $S$.  We now describe the function $g=\mR_S(f)$, for some function $f \in C_0^\infty(\overline{B})$. Here, $B$ is the ball enclosed by $S$, $\overline{B}$ is its closure in $\uS^2$, and $C_0^\infty(\overline{B})$ is the space  of functions $f \in C_0^\infty(\uS^2)$ such that $supp(f) \subset \overline{B}$. Simple arguments show that $g \in C^\infty(S \times [0,\pi])$, $g(x,r)=0$ for $r \geq 2R$, and $g(x,r)$ vanishes up to infinite order at $r=0$. We, then, say that $g \in C_0^\infty(S \times [0,2R])$. This is the {\bf support \& smoothness condition}.  

The second condition comes from the PDE characterization (\ref{E:Darbs}). Let us consider the eigenvalue problem  \begin{equation}\label{E:sphp}\left[d^2_r + \cot(r) d_r \right] h(r) + \llg (\llg +1)h(r) =0,~ h(0)=1,~h'(0)=0. \end{equation}
This problem has a unique solution for any $\llg \in \cN$ (see, e.g., \cite{BerZalc}): $h_\llg(r) = P_{\llg}(\cos(r))$. Here, $P_\llg$ is the Legendre function of order $\llg$. 

Let us  also consider the eigenvalue problem \begin{eqnarray*}\left\{ \begin{array}{l} -\Delta \varphi (x) = \llg(\llg+1) \varphi (x),~ x\in B,\\ \varphi(y)=0,~ y \in S. \end{array} \right. \end{eqnarray*}
This equation has nontrivial solutions for a discrete set $\{\llg_k\}_{k=1}^\infty$. We will denote by $\varphi_k$ the corresponding unit $L^2$-norm eigenfunctions. Then, $\{\varphi_k\}_{k=1}^\infty$ is an orthonormal basis of $L^2(B)$. 

For any $k \in \nN$, $u(x,r) :=h_{\llg_k}(r) \varphi_k(x)$ solves the initial boundary value problem  \begin{eqnarray*}\left\{ \begin{array}{l} [\partial_r^2 +  \cot(r) \partial_r -\Delta] u(x,r) =0, (x,t) \in B \times (0,\pi),\\ u(y,r)=0,~ y \in S,\\  \partial_r u(x,0) = 0, ~ x \in B. \end{array}\right. \end{eqnarray*} Multiplying equation (\ref{E:Darbs}) by $u(x,r) \sin(r)$ and taking integration by parts, one obtains the {\bf orthogonality condition}:
\begin{eqnarray*} \int\limits_0^\pi \int\limits_S g(x,r) \partial_\nu \varphi_k(x,r) h_{\llg_k}(r) \sin(r) dt =0. \end{eqnarray*}
We assert that the support \& smoothness and orthogonality conditions are also sufficient for the range description of $\mR_S$:
\begin{theorem}\label{T:MainP}
Let $g$ be a function defined on $S \times [0,\pi]$. Then, there is a function $f \in C_0^\infty(\overline{B})$ such that $g=\mR_{\uS}(f)$ if and only if $g$ satisfies the following conditions: 
\begin{itemize}
\item[1.]{Smoothness $\&$ support condition:} $g \in C_0^\infty(S \times [0,2R])$.
\item[2.]{Orthogonality condition:} \begin{eqnarray}\label{E:OrthP} \int \limits_{0}^\pi \int\limits_{S} g(x,r) \partial_\nu \varphi_k(x) h_{\llg_k}(r) \sin(r) d\sg(x) dr =0.\end{eqnarray}
\end{itemize} 
\end{theorem}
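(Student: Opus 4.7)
The plan is to transport the three-step proof of Theorem \ref{T:MainH} to the spherical setting. Necessity has already been established in Section \ref{S:P}; for sufficiency I would proceed through (i) constructing a smooth solution $U \in C^\infty(\overline{B} \times [0,\pi])$ of the time-reversed Darboux problem
\begin{eqnarray*}
\left\{\begin{array}{l}
[\partial_r^2 + \cot(r) \partial_r - \Delta] U(x,r) = 0, \ (x,r) \in B \times (0,2R],\\
U|_{S \times [0,2R]} = g,\\
U(x,r) = 0, \ U_r(x,r) = 0, \ x \in B, \ r \geq 2R,
\end{array}\right.
\end{eqnarray*}
(ii) proving $f^*(x) := U(x,0)$ vanishes to infinite order on $S$; (iii) extending $f^*$ by zero and verifying $\mR_S(f) = g$. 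The hypothesis $R < \pi/2$ gives $2R < \pi$, so prescribing vanishing for $r \geq 2R$ is consistent and keeps us safely away from the antipodal singularity of the Darboux equation at $r = \pi$.

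For step (i), I would replace the hyperbolic Fourier-Legendre transform by its spherical analog $\widehat{v}(\llg) = \int_0^\pi v(r) P_\llg(\cos r) \sin(r)\, dr$. The symmetry $P_\llg = P_{-\llg-1}$ makes $\widehat v$ an even function of $\mu := \llg + 1/2$, and the algebraic identity $\llg(\llg+1) = \mu^2 - 1/4$ together with an adapted Paley-Wiener theorem yields a bijective intertwiner $T : \mC_{2R}(H) \to \mC_{2R}(H)$ satisfying $T(u_{tt} + \tfrac{1}{4} u) = [d_r^2 + \cot(r) d_r]\, T(u)$. Setting $V = T^{-1}(U)$ and $b = T^{-1}(g)$ converts the problem into the Klein-Gordon equation $V_{tt} + \tfrac{1}{4} V - \Delta V = 0$ on $B \times \rN$ with Dirichlet data $b$ and zero Cauchy data at $t = -2R$, which is hyperbolic and admits smooth solutions by standard theory. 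Writing $W = V - E(b)$ with $E$ the harmonic extension and expanding in Dirichlet eigenfunctions $\{\varphi_k\}$ of $-\Delta$ on $B$ (eigenvalues $\llg_k(\llg_k+1)$) yields $\og_k''(t) + (\llg_k + \tfrac{1}{2})^2 \og_k(t) = -p_k(t)$. The same computation as in the hyperbolic case translates the orthogonality condition (\ref{E:OrthP}) into $\widetilde{p}_k(\llg_k + \tfrac{1}{2}) = 0$, killing the resonance and yielding $\og_k \in \mC_{2R}(\cN)$, hence $V \in \mC_{2R}(H^s(B))$ for all $s > 0$.

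For step (ii), use polar coordinates about the center $x_0$ of $B$: with $s$ the geodesic distance and $\theta$ the angular variable on $\uS^1$, decompose $U(x,r) = \sum_{m \in \mathbb{Z}} U_m(s,r) e^{im\theta}$ and $f^*(x) = \sum_m F_m(s) e^{im\theta}$. The radial operators are $\mD_{m,s} = d_s^2 + \cot(s) d_s - m^2/\sin^2(s)$, and a direct commutator computation identifies the intertwiner as $\Gamma_k = d_s + k \cot(s)$, yielding $\Gamma_k \mD_k = \mD_{k-1} \Gamma_k$. Taking the test functions $u_i(s) = \cos^i(s) \sin^{-|m|}(s)$, a telescoping calculation gives $\mQ_{|m|} u_i \equiv 0$ for $0 \leq i \leq |m|-1$, and a direct expansion of $\mD_{|m|}$ produces a spherical analog of Proposition \ref{P:D_m} with explicit, strictly monotone eigenvalues $\kappa_i$. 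The analogs of Lemmas \ref{L:D_m} and \ref{L:Q_m} then carry over by the domain-of-dependence property for the Klein-Gordon equation above and the symmetry arguments of \cite{HelGeo}. The main obstacle is the algebraic/positivity step at the heart of Lemma \ref{L:Ind}: one must verify that the spherical $\kappa_i$ are pairwise distinct and that $u_i(R) > 0$ so that the factorization/positivity argument closes. The hypothesis $R < \pi/2$ gives $\cos(R), \sin(R) > 0$, securing the positivity; strict monotonicity should follow from the explicit quadratic form of $\kappa_i$.

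For step (iii), the argument of Section 2.3 transfers with only notational changes. Set $G = \mR(f)$ and $H = G - U$ on $\overline{B} \times [0,\pi]$; the domain-of-dependence argument (the spherical Darboux equation sharing characteristics with the Klein-Gordon equation above) forces $H$ and its spatial derivatives to vanish at $x_0$ for all $r$. Taking the Fourier-Legendre transform in $r$ yields the elliptic eigenvalue equation $[\llg(\llg+1) + \Delta] \widehat{H}(\cdot,\llg) = 0$ on $B$, so $\widehat{H}(\cdot,\llg)$ is real-analytic; infinite-order vanishing at $x_0$ forces $\widehat{H} \equiv 0$, and injectivity of the Fourier-Legendre transform concludes $H \equiv 0$, and hence $\mR_S(f) = g$.
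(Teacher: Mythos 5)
Your proposal is correct and follows essentially the same route as the paper: the paper likewise proves Theorem \ref{T:MainP} by the three-step scheme of Theorem \ref{T:MainH}, carrying out in detail only the first step (Theorem \ref{T:ST1P}) via the shifted spherical Fourier--Legendre/Paley--Wiener correspondence $\widehat{v}(\llg)=\widetilde{u}(\llg+\tfrac12)$ and the Klein--Gordon reduction, and asserting that the vanishing and uniqueness steps transfer by replacing hyperbolic by circular trigonometric functions. Your explicit flagging of the points that must be rechecked in the transferred Lemma \ref{L:Ind} (pairwise distinctness and monotonicity of the spherical $\kappa_i$, and $u_i(R)>0$ from $R<\pi/2$) is if anything more careful than the paper's own sketch; the only caveat is that the spherical $\kappa_i$ come out increasing rather than decreasing, which flips a sign in the positivity argument but does not affect its conclusion.
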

 
Similarly to that of Theorem \ref{T:MainH}, the proof of this theorem also consists of three steps:
\begin{itemize}
\item[{\bf Step 1:}]
There is a unique solution $G \in C^\infty(\overline{B} \times [0,\pi])$ of the time-reversed equation \begin{eqnarray} \label{E:TimeRP}\left\{ \begin{array}{l} [\partial_r^2 + \cot(r) \partial_r -\Delta] G(x,r)=0,~(x,r) \in  B \times (0,2R], \\ G(x,r) =g(x,r),~ (x, r) \in S \times (0,2R],\\
G(x,2R)=0,~ G_r(x,2R)=0,~\ x \in B.
\end{array} \right.
\end{eqnarray}

\item[{\bf Step 2:}] The function $f^*=G(.,0)$ vanishes up to infinite order on $S$. 
\item[{\bf Step 3:}] Let $f$ be the zero extension of $f^*$ to $\uS^2$. Then, $g=\mR_S(f)$. 
\end{itemize}

The second and third steps are almost exactly the same as that of Theorem \ref{T:MainH}. One only needs to replace the hyperbolic trigonometric functions ($\sinh, \cosh, \tanh$, etc) by their usual analogs ($\sin,\cos,\tan$, etc). The proof of the first step (Theorem \ref{T:ST1P}, below) is a little bit different from that of the hyperbolic space (Theorem \ref{T:ST1}). The reason is that the Paley-Wiener theory for $\uS^2$ and $\Hs^n$ have different forms. We carry out here the first step, which is to prove: 

\begin{theorem}\label{T:ST1P} Assume the smoothness $\&$ support and orthogonality conditions. Then the problem (\ref{E:TimeRP}) has a unique solution $G \in C^\infty(\overline{B} \times [0,\pi])$.  \end{theorem}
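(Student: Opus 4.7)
The plan is to mirror the three-step strategy of Theorem \ref{T:ST1}, but with Fourier-Legendre series on the compact interval $[0,\pi]$ replacing the continuous Fourier-Legendre transform of the hyperbolic case. First, I would subtract a harmonic extension from $G$ to reduce to a forced problem with zero boundary data, then expand in the Dirichlet eigenbasis $\{\varphi_k\}$ and analyze each Fourier coefficient as an ODE in $r$, with the orthogonality hypothesis supplying the compatibility that kills the singularity at $r=0$.

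Concretely, let $E(g)(\cdot,r)$ be the harmonic extension of $g(\cdot,r)$ from $\dS$ to $\overline{\dB}$; since $g$ vanishes for $r\ge 2R$, so does $E(g)$. Setting $W=G-E(g)$ turns (\ref{E:TimeRP}) into
\begin{eqnarray*}\left\{\begin{array}{l}[\partial_r^2+\cot(r)\partial_r-\Delta]W=-P,\quad (x,r)\in \dB\times(0,2R],\\ W|_{\dS}=0,\\ W(x,2R)=W_r(x,2R)=0,\end{array}\right.\end{eqnarray*}
with $P:=E([\partial_r^2+\cot(r)\partial_r]g)$. Expanding $W=\sum_k\og_k(r)\varphi_k$ and $P=\sum_k p_k(r)\varphi_k$, each coefficient satisfies
\begin{eqnarray*}\og_k''(r)+\cot(r)\og_k'(r)+\llg_k(\llg_k+1)\og_k(r)=-p_k(r),\qquad \og_k(2R)=\og_k'(2R)=0.\end{eqnarray*}
This ODE has a regular singular point at $r=0$ with both indicial exponents equal to $0$, so its fundamental solutions are $h_{\llg_k}(r)=P_{\llg_k}(\cos r)$, smooth at the origin, and a second solution behaving like $-\log r$ there. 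Backward integration from $r=2R$ is routine on $(0,2R]$; the real issue is regularity at $r=0$.

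The crucial observation, which I would prove next, is that by variation of parameters the coefficient of the logarithmic solution in $\og_k$ is proportional to $\int_0^{2R}h_{\llg_k}(s)p_k(s)\sin s\,ds$. A direct integration by parts, using the self-adjoint form $B_r=\frac{1}{\sin r}\partial_r(\sin r\,\partial_r)$, the Dirichlet condition on $\varphi_k$, and the harmonicity of $P$ in $x$, identifies this integral with
$$\int_0^{2R}\!\!\int_\dS g(x,s)\partial_\nu\varphi_k(x)h_{\llg_k}(s)\sin s\,d\sg(x)\,ds,$$
which vanishes precisely by the orthogonality hypothesis (\ref{E:OrthP}). Hence the log-singular component drops out, each $\og_k$ extends smoothly through $r=0$ with $\og_k'(0)=0$, and standard ODE theory gives evenness.

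The remaining, and in my view main, technical obstacle is to verify that $\sum_k\og_k(r)\varphi_k(x)$ converges in $C^\infty(\overline{\dB}\times[0,2R])$. The smoothness and support of $g$ together with elliptic regularity of the harmonic extension force $\|p_k\|_{C^j([0,2R])}$ to decay faster than any polynomial in $\llg_k$; combined with uniform bounds on $P_{\llg_k}$, $Q_{\llg_k}$ and their derivatives on compact subsets of $(0,\pi)$ and the explicit Green's-function representation of $\og_k$, this yields matching rapid decay of $\og_k$ and all its derivatives. This is the point where the discrete Paley-Wiener character of $\uS^2$ enters, replacing the Paley-Wiener pole-cancellation step of Theorem \ref{T:ST1}. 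Setting $G=W+E(g)$ and extending by zero past $r=2R$ (which is $C^\infty$ because all $r$-derivatives of $G$ at $r=2R$ vanish, by differentiating the PDE) then produces the desired smooth solution of (\ref{E:TimeRP}) on $\overline{\dB}\times[0,\pi]$, and uniqueness follows from the energy argument given earlier for the forward problem (\ref{E:Darbs}), applied in the time-reversed direction.
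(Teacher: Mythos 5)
Your per-mode analysis is a genuinely different (and, for each fixed $k$, correct) way to see the role of the orthogonality condition: the coefficient of the logarithmic Frobenius solution of $\og_k''+\cot(r)\og_k'+\llg_k(\llg_k+1)\og_k=-p_k$ at the regular singular point $r=0$ is indeed a multiple of $\int_0^{2R}h_{\llg_k}(s)p_k(s)\sin s\,ds$, and your integration by parts correctly identifies this with the orthogonality integral \mref{E:OrthP}. The paper instead avoids the singular point altogether by conjugating the Darboux operator to $\partial_t^2+\frac14-\Delta$ via the transmutation $T$ built from the discrete Paley--Wiener theorem of \cite{Olaf} (with the shift $\llg\mapsto\llg+\frac12$), and uses the eigenfunction expansion only to show that each coefficient of the \emph{wave} solution lies in $\mC_{2R}(\cN)$, i.e.\ is even and supported in $[-2R,2R]$; the $C^\infty$ regularity of the sum is then free from standard hyperbolic theory, and smoothness of $G=T(V)$ at $r=0$ is inherited from $T$.

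The gap in your argument is precisely at the step you flag as the main technical obstacle, and your proposed resolution rests on a false premise. The coefficients $p_k(t)=\int_B P(x,t)\varphi_k(x)\,dx$ of the harmonic extension $P=E(B_tg)$ do \emph{not} decay faster than any polynomial in $\llg_k$: Green's identity gives $p_k(t)=-\llg_k^{-1}(\llg_k+1)^{-1}\int_S (B_tg)(x,t)\,\partial_\nu\varphi_k(x)\,d\sg(x)$, and since $\|\partial_\nu\varphi_k\|_{L^2(S)}$ grows like $\llg_k$ while $\int_S\phi\,\partial_\nu\varphi_k\,d\sg$ has no reason to be small for a fixed smooth $\phi\not\equiv 0$, one only gets $p_k=O(\llg_k^{-1})$ generically (a nonzero harmonic function never satisfies the Dirichlet boundary condition to infinite order, which is what rapid decay of its eigencoefficients would require). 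Consequently $\og_k$ decays only polynomially, the series $\sum_k\og_k\varphi_k$ cannot be summed absolutely in $C^\infty(\overline{B}\times[0,2R])$, and smoothness of $W$ must come from an a priori estimate for the evolution equation rather than from the series --- which is exactly what is unavailable for the singular Darboux equation near $r=0$. (Relatedly, your appeal to bounds on $P_{\llg_k}$, $Q_{\llg_k}$ ``on compact subsets of $(0,\pi)$'' excludes the singular point, which is the only place where anything is at stake.) To close the gap you would either need uniform-in-$k$ control of the Green's function of the singular ODE up to $r=0$ together with a summation argument that exploits cancellation rather than absolute convergence, or you should adopt the paper's device of transferring the problem to the non-singular wave equation before invoking regularity theory.
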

 
 

The idea is to relate equation (\ref{E:TimeRP}) to a wave equation. Let us now introduce another Fourier-Legendre transform, which is essentially the spherical transform on $\uS^2$ (see, e.g., \cite{HelGeo}).



\begin{defi} Let $v \in C^\infty([-\pi,\pi])$ be an even function. Then the Fourier-Legendre transform of $m$ is $m \in \mathbb{Z} \mapsto \widehat{v}(m)$ defined by:
\begin{eqnarray*} \widehat{v}(m) = \frac{1}{2} \int\limits_0\limits^\pi v(r) h_m(r) \sin(r) dr. \end{eqnarray*}
The Fourier-Legendre series of $f$ is $$\sum\limits_{m=0}^\infty (2m+1) \widehat{v}(m) h_m(r).$$
\end{defi} 
The function $h_m$ is determined by equation (\ref{E:sphp}). The Fourier-Legendre series converges uniformly to $v$ if $v \in C^\infty[-\pi,\pi]$ (see, e.g., \cite{Olaf}). Moreover, one also has the following analog of the Paley-Wiener theorem:
\begin{theorem} (\cite{Olaf}) \label{T:PWP} An even function $v \in C^\infty([-\pi,\pi])$ is supported in $[-a,a]$ if and only if its Fourier-Legendre transform $m \mapsto \widehat{v}(m)$ extends to an entire function $g$ on $\cN$ satisfying: 
\begin{itemize} 
\item[i)] For any $k \in \nN$, there exists $C_k>0$ such that  $$| g(\llg) | \leq C_k (1+|\llg|)^{-k} e^{a \Im(\llg)},$$ 
\item[ii)] $g(\llg -\frac{1}{2})$ is an even function. 
\end{itemize}
The above extension $g$ of $\widehat{v}$ is unique. 

Conversely, any entire function $g$ on $\cN$ satisfying i) and ii) is the extension of $\widehat{v}$, for  a unique even function $v \in C^\infty([-\pi,\pi])$ supported in $[-a,a]$. 
\end{theorem}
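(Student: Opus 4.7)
The strategy is to reduce the Fourier--Legendre Paley--Wiener theorem to the classical Paley--Wiener theorem for the Fourier cosine transform on $\rN$, using the Mehler--Dirichlet integral representation of $P_\llg(\cos r)$:
$$P_\llg(\cos r) = \frac{\sqrt{2}}{\pi}\int_0^r \frac{\cos\bigl((\llg+\tfrac12)\theta\bigr)}{\sqrt{\cos\theta - \cos r}}\,d\theta, \quad 0 < r < \pi.$$
Substituting this into the definition of $\widehat{v}(\llg)$ and exchanging the order of integration gives
$$\widehat{v}(\llg) = \int_0^\pi w(\theta)\,\cos\bigl((\llg + \tfrac12)\theta\bigr)\,d\theta,$$
where $w$ is the Abel-type transform
$$w(\theta) = \frac{\sqrt{2}}{2\pi}\int_\theta^\pi \frac{v(r)\sin r}{\sqrt{\cos\theta - \cos r}}\,dr.$$
After the change of variables $t = \cos\theta$, $u = \cos r$, this reduces to the classical Abel integral $W(t) = c \int_{-1}^t V(u)/\sqrt{t-u}\,du$, which is a smoothness-preserving bijection on the relevant function spaces and is causal in the sense that $\mathrm{supp}(w) \subset [0,a]$ if and only if $\mathrm{supp}(v) \subset [0,a]$.

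For the necessity direction, if $v$ is supported in $[-a,a]$, then so is $w$, and the map $\mu \mapsto \int_0^\pi w(\theta)\cos(\mu\theta)\,d\theta$ is the Fourier cosine transform of an even smooth function supported in $[-a,a]$. The classical Paley--Wiener theorem on $\rN$ then provides an entire extension with the required rapid decay and exponential growth bound $C_k(1+|\mu|)^{-k} e^{a|\Im\mu|}$; setting $\mu = \llg + \tfrac12$ yields the entire extension $g$ of $\widehat{v}$ satisfying (i), since $\llg$ and $\mu$ differ by a constant. Condition (ii) --- evenness of $\mu \mapsto g(\mu - \tfrac12)$ --- is immediate from evenness of the cosine transform in $\mu$, or equivalently from the functional equation $P_\llg = P_{-\llg - 1}$ of Legendre functions.

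For the sufficiency direction, given $g$ satisfying (i) and (ii), the function $\mu \mapsto g(\mu - \tfrac12)$ is entire, even in $\mu$, and has Paley--Wiener growth corresponding to support in $[-a,a]$. The classical Paley--Wiener theorem produces a unique even $w \in C^\infty([-\pi,\pi])$ supported in $[-a,a]$ whose Fourier cosine transform is $g(\mu - \tfrac12)$. Inverting the Abel integral equation yields a unique even $v \in C^\infty([-\pi,\pi])$ supported in $[-a,a]$ with $\widehat{v} = g|_{\nN}$, and uniqueness of $v$ follows from injectivity of the Fourier--Legendre transform on smooth functions (which itself reduces to injectivity of the cosine transform after the Abel change of variables).

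The principal technical obstacle is the bidirectional smoothness preservation of the Abel transform on $[-\pi,\pi]$. After the substitution $u = \cos r$, smoothness of $v \in C^\infty([-\pi,\pi])$ \emph{as an even function in $r$} translates to smoothness of $V(u) = v(\arccos u)$ on the interior $(-1,1)$ together with one-sided smoothness at $u = \pm 1$ of the specific form dictated by evenness of $v$ in $r$. The classical Abel inversion formula, up to a constant multiple
$$V(u) = \frac{d}{du}\int_{-1}^u \frac{W(t)}{\sqrt{u-t}}\,dt,$$
must then be verified to produce exactly such a $V$ from a smooth even $W$, and in particular to preserve vanishing to infinite order at the endpoint $u = \cos a$ corresponding to $r = a$. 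This is a routine but delicate endpoint analysis, which together with the Mehler--Dirichlet identity constitutes the core content of the cited reference \cite{Olaf}.
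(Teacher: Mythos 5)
The paper contains no proof of Theorem \ref{T:PWP}: it is imported verbatim from \cite{Olaf}, so there is no internal argument to compare yours against, and the comparison has to be made with the cited reference. Your route is genuinely different and considerably more elementary. \'Olafsson and Schlichtkrull prove a local Paley--Wiener theorem for general compact symmetric spaces; you exploit the rank-one structure of $\uS^2$ to collapse everything onto the classical Paley--Wiener theorem for the Fourier cosine transform, via the Mehler--Dirichlet representation of $P_\llg(\cos r)$ and the Abel (Riemann--Liouville) transform. The reduction itself is sound: the interchange of integrals is justified by the integrable square-root singularity; the shift $\mu=\llg+\tfrac12$ converts evenness of the cosine transform exactly into condition ii) (equivalently, into the functional equation $P_\llg=P_{-\llg-1}$); and the two-sided causality of the Abel transform is precisely what transfers exponential type $a$ into support in $[-a,a]$ and back. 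This buys self-containedness and parallels the paper's hyperbolic side, where Lemma \ref{L:PW} (Helgason) plays the same role; the price is generality --- the argument does not extend in this form to higher rank or to $\uS^n$, $n\geq 3$, which is exactly the case the paper postpones.

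Two pieces are genuinely missing. First, the theorem asserts uniqueness of the entire extension $g$, which your proposal never addresses, and it does not follow from injectivity of the transform on functions: two distinct entire functions can agree on $\nN$ (they may differ by $\sin(\pi z)$). For $a<\pi$ it follows from a Carlson-type argument: the difference of two extensions is entire of exponential type $a<\pi$, bounded on the positive reals by i), and vanishes on $\nN$, hence vanishes identically. This also surfaces the implicit hypothesis $a<\pi$ (harmless for the paper, which only needs $a=2R<\pi$), which your necessity step in fact requires as well: smoothness of $V(u)=v(\arccos u)$ at $u=-1$ fails for a general even smooth $v$ unless $v$ vanishes near $r=\pi$. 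Second, the two-sided smoothness-and-support preservation of the Abel transform, which you label ``routine but delicate'' and defer to the cited reference, is the actual mathematical core of your proof, and the deferral is not legitimate: \cite{Olaf} proves the theorem for general compact symmetric spaces by different methods and contains no such Abel-transform lemma in this form. You would have to prove it within your argument. It is true, and provable along the lines you sketch --- write the fractional integral and its inverse as convolution with $s^{-1/2}$ against a function vanishing to infinite order at the support edge $\cos a$ --- so the defect is one of attribution and completeness rather than correctness of the strategy.
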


Since $h_\llg$ depends on $\llg$ analytically, one can easily see that the function $g$ in the above theorem is defined by $$g(\llg) = \frac{1}{2} \int\limits_0\limits^\pi v(r) h_\llg(r) \sin(r) dr. $$ From now on, we will call $g$ the extended Fourier-Legendre transform of $v$, which is also denoted by $\widehat{v}$.  
Although the above theorem was stated for $v:[-\pi,\pi] \to \cN$, it is also true for $v:[-\pi,\pi] \to H$, for any Hilbert space $H$. 

\subsection{Proof of Theorem \ref{T:ST1P}}
As we mentioned, we will relate equation (\ref{E:TimeRP}) to a wave equation. For our convenience, we will denote by $\mC_a(H)$ the space of even functions $v \in C^\infty([-\pi,\pi]; H)$ such that $supp(f) \subset [-a,a]$. We also denote by $PW_a(H)$ the space of entire functions $g: \cN \to H$ such that for each $k \in \nN$, there is a constant $C_k$ such that $$\|g(\llg)\| \leq C (1+|\llg|)^{-N} e^{a \Im (\llg)},~\forall \llg \in \cN.$$

We now construct a bijective map $T$ between $\mC_{a}(H)$ and itself such that  \begin{equation}\label{E:T}T(u_{tt}+\frac{1}{4} u)= B_rT(u).\end{equation}
Here, $B$ is the Bessel-type operator $$B_r= \frac{d^2}{dr^2} +  \cot(r) \frac{d}{dr}. $$
For each $u \in \mC_a(H)$, let us consider its Fourier transform $\widetilde{u}$. Due to the standard Paley-Wiener theory for Fourier transform, $\widetilde{u}$ extends to an even function belonging to the class $PW_a$. Now, due to Theorem \ref{T:PWP}, there is a function $v \in \mC_a(H)$ such that its extended Fourier-Legendre transform $\widehat{v}(\llg)$ is equal to  $\widetilde{u}(\llg+\frac{1}{2})$. Defining $T(u)=v$, we now prove the identity (\ref{E:T}). 
Let $g_1$ be the Fourier transform of $u_{tt}+\frac{1}{4}u$. Standard calculations show $$g_1(\llg) = \left(-\llg^2+\frac{1}{4}\right) \widetilde{u}(\llg). $$
Let $g_2$ be the extended Fourier-Legendre transform of $B_rv$. We recall from (\ref{E:sphp}) that $$B_r h_\llg(r) = -\llg(\llg+1) h_\llg(r).$$ Standard integrations by parts give:  $$g_2(\llg) = -\llg (\llg+1)\widehat{v}(\llg).$$
We recall from the definition of $T$ that $v=T(u)$ implies $\widehat{v}(\llg) = \widetilde{u}(\llg+\frac{1}{2})$. Therefore, $g_2(\llg) = g_1(\llg+\frac{1}{2})$. Due to the definition of the operator $T$, we derive (\ref{E:T}).

Let us consider the wave equation  \begin{eqnarray}\label{E:IWP} \left\{\begin{array}{l} V_{tt}(x,t) +\frac{1}{4}V(x,t) - \Delta V(x,t) =0,~x \in B,~ t \geq -2R, \\ V(x,t)=b(x,t), ~ x \in S,\\ V(x,-2R)=0,~V_t(x,-2R)=0, ~ x \in B \end{array}  \right. \end{eqnarray}
where, $b=T^{-1}(g)$ belongs to $\mC_{2R}(H^s(S))$ since $g \in \mC_{2R}(H^s(S))$, for any $s>0$. We now prove that the above equation has a uniquely solution $V \in  \mC_{2R}(H^s(B))$. 

Standard theory for wave equations shows (\ref{E:IWP}) has a unique solution $V \in C^\infty([-2R,\infty); H^{s}(B))$. Extending the solution by zero for $t \leq -2R$, we obtain $V \in C^\infty(\rN; H^s(B))$.  We now prove that $V \in \mC_{2R}(H^s(B))$. Let $W(x,t)=V(x,t)-E(b)(x,t)$, where $E(b)(.,t)$ is the harmonic extension of $b(.,t)$ to $\overline{B}$. We arrive to \begin{eqnarray}\label{E:WWaveP} \left\{\begin{array}{l} W_{tt}(x,t) +\frac{1}{4}W(x,t) - \Delta W(x,t) =-P(x,t), (x,t) \in B \times \rN, \\ W(x,t)=0,~ x \in S, \\ W(x,-2R)=W_t(x,-2R)=0,~ x \in B. \end{array}  \right. \end{eqnarray} Here, $$P(x,t) = \partial_t^2 E(b)(x,t) + \frac{1}{4}E(b)(x,t) = E(b_{tt} + \frac{1}{4} b)(x,t)$$ is the harmonic extension of $b_{tt}+ \frac{1}{4}b$. 
Let $\{\varphi_k\}_k$ be the orthonormal basis of $L^2(B)$ defined above.  We expand the functions $W$ and $P$ in terms of $\varphi_k$:
\begin{eqnarray*}W(x,t) =\sum_{k} \og_k(t) \varphi_{k}(x), \quad P(x,t) =  \sum_{k} p_k(t) \varphi_{k}(x).\end{eqnarray*} 
Equation (\ref{E:WWaveP}) reduces to  \begin{eqnarray}\label{E:SODEP} \left\{ \begin{array}{l}  \og''_k(t)+\frac{1}{4}\og_k(t) + \llg_k (\llg_k+1) \og_k(t) = -p_k(t), \\ \og_k(-2R)=\og'_k(-2R)=0. \end{array} \right.\end{eqnarray} Here, $-\llg_k(\llg_k+1)$ is the eigenvalue corresponding to $\varphi_k$.
The function $p_k$ is determined by $$p_k(t) = \int\limits_B P(x,t) \varphi_k(x)=\frac{-1}{\llg_k(\llg_k+1)} \int \limits_B P(x,t) \Delta \varphi_k(x) dx .$$
Taking integration by parts, we obtain \begin{eqnarray*} p_k(t) &=& \frac{-1}{\llg_k(\llg_k+1)}\int\limits_S P(x,t) \partial_\nu \varphi_k(x) d\sg(x).\end{eqnarray*}
Since $P(x,t)$ is the (harmonic) extension of $b_{tt}(x,t)+\frac{1}{4}b(x,t)$:  \begin{eqnarray}\label{E:pk} p_k(t) = \frac{-1}{\llg_k(\llg_k+1)}\int\limits_S \left[b_{tt}(x,t) + \frac{1}{4} b(x,t) \right] \partial_\nu \varphi_k(x) d\sg(x).\end{eqnarray}
The Fourier transform $\widetilde{p}$ of $p$ is determined by 
$$\widetilde{p}_k(\llg) = \frac{\llg^2-\frac{1}{4}}{\llg_k(\llg_k+1)} \int\limits_S \widetilde{b}(x, \llg) \partial_\nu \varphi_k(x) d\sg(x)= \frac{\llg^2-\frac{1}{4}}{\llg_k(\llg_k+1)} \int\limits_S \widehat{g}(x,\llg-\frac{1}{2}) \partial_\nu \varphi_k(x) d\sg(x).$$
The last equality is due to the relation $g=T(b)$. 
We notice that the orthogonality condition (\ref{E:OrthP}) can be read as $$\int\limits_S \widehat{g}(x, \llg_k) \partial_\nu \varphi_k(x) d\sg(x)=0,\quad \forall k \in \nN.$$
Therefore,  $\widetilde{p}(\llg_k+\frac{1}{2})= 0$. 

Let $\gamma=\gamma(t)$ be such that its Fourier transform $\widetilde{\gamma}$ satisfies: $$\left[-\llg^2 + \frac{1}{4}+ \llg_k(\llg_k+1)\right]\widetilde{\gamma}(\llg) = - \widetilde{p}_k(\llg).$$ 
Since $p_k \in \mC_r(\rN)$, $\widetilde{p}_k$ is an even function belonging to $PW_{2R}$. Recalling that $\widetilde{p}(\llg_k+\frac{1}{2})=0$,  one obtains $$\widetilde{\gamma}(\llg) =\frac{-\widetilde{p}_k(\llg)}{-\llg^2+\frac{1}{4} +\llg_k(\llg_k+1)}= \frac{\widetilde{p}_k(\llg)}{\llg^2-(\llg_k+\frac{1}{2})^2}$$ is also an even function belonging to $PW_{2R}$. Due to the standard Paley-Wiener theory, $\gamma \in \mC_{2R}(\rN)$. We can verify that $\ag$ solves equation (\ref{E:SODEP}). Hence, $\og_k=\gamma$, due to the uniqueness of the solution of the equation (\ref{E:SODEP}). This implies that $\og_k \in C_{2R}(\rN)$ for all $k$. Since $W \in C^\infty(\rN; H^s(B))$, we obtain $W \in \mC_{2R}(H^s(B))$. We then conclude that the solution  $V=W+ E(b)$ of (\ref{E:TimeRP}) satisfies $V \in \mC_{2R}(H^s(B))$. 

Now, let $G=T(V) \in \mC_{2R}(H^s(B))$. Due to (\ref{E:T}) and (\ref{E:IWP}), we obtain the equation (\ref{E:TimeRP}):
\begin{eqnarray*} \left\{ \begin{array}{l} [\partial_r^2 + \cot(r) \partial_r -\Delta] G(x,r)=0,~(x,r) \in  B \times (0,2R], \\ G(x,r) =g(x,r),~ (x, r) \in S \times (0,2R],\\
G(x,2R)=0,~ G_r(x,2R)=0,~\ x \in B.
\end{array} \right.
\end{eqnarray*}  Since $s$ is arbitrary, we conclude that $G \in C^\infty(\overline{B} \times [0,\pi])$. This concludes the theorem. 
\section*{Appendix}
\subsection{Proof of Proposition \ref{P:Com}}

\begin{proof}
Let  \begin{eqnarray*}
A&=& d_s \mD_k = d_s \left[d_s^2 +(n-1) \coth(s) d_s - \frac{k(k+n-2)}{\sinh^2 s}\right], \\ 
B&=&(n+k-2)\coth(s) \mD_k \\ &=& (n+k-2)\coth(s) \left[d_s^2 +(n-1) \coth(s) d_s - \frac{k(k+n-2)}{\sinh^2 s}\right],\\
C&=&\mD_{k-1}d_s \\ &=& \left[d_s^2 +(n-1) \coth(s) d_s - \frac{(k-1)(k+n-3)}{\sinh^2 s}\right] d_s, \\ 
D&=& \mD_{k-1} (n+k-2) \coth(s) \\&=&(n+k-2) \left[\partial_s^2 +(n-1) \coth(s) d_s - \frac{(k-1)(k+n-3)}{\sinh^2 s}\right]  \coth(s).
\end{eqnarray*}

We have \begin{eqnarray*}
A-C&=& d_s \left[d_s^2 +(n-1) \coth(s) d_s - \frac{k(k+n-2)}{\sinh^2 s}\right], \\ 
&-& \left[d_s^2 +(n-1) \coth(s) d_s - \frac{(k-1)(k+n-3)}{\sinh^2 s}\right] d_s \\ 
&=& -(n-1) \frac{1}{\sinh^2 s} d_s +2k(k+n-2)\sinh^{-3}(s) \cosh(s) d_s\\ &+& \left[-k(k+n-2)+(k-1)(k+n-3)\right]\frac{1}{\sinh^2 s} \\
&=& -2(n+k-2)\frac{1}{\sinh^2 s}d_s +2k(k+n-2)\sinh^{-3}(s) \cosh(s).
\end{eqnarray*}
On the other hand, 
\begin{eqnarray*}
D-B&=& (n+k-2) \left[d_s^2 +(n-1) \coth(s) d_s - \frac{(k-1)(k+n-3)}{\sinh^2 s}\right]  \coth(s)\\
&-& (n+k-2)\coth(s) \left[d_s^2 +(n-1) \coth(s) d_s - \frac{k(k+n-2)}{\sinh^2 s}\right].\\
&=& (n+k-2)\left[d^2 \coth(s) + 2 \partial_s \coth(s) d_s +(n-1) \coth(s) d_s \coth(s) \right] \\
&+&(n+k-2)\left[k(n+k-2) -(k-1)(n+k-3) \right]\sinh^{-2}(s)\coth(s) \\ &=& (n+k-2)\left[-2 \sinh^{-2}(s)d_s+ 2k \sinh^{-3}(s) \cosh(s) \right]. 
\end{eqnarray*}
We conclude that $$A-C=D-B.$$
This proves the proposition.
\end{proof}

\subsection{Proof of Proposition \ref{P:D_m}} 
\begin{proof}
We have \begin{eqnarray*} d_s[ \cosh^i(s) \sinh^l(s)] &=& i \cosh^{i-1}(s) \sinh^{l+1}(s) +l \cosh^{i+1}(s) \sinh^{l-1}(s) \\ &=&(i+l) \cosh^{i-1}(s) \sinh^{l+1}+ l \cosh^{i-1}\sinh^{l-1} .\end{eqnarray*}
Hence,
 \begin{eqnarray*}(n-1) \coth(s) d_s[ \cosh^i(s) \sinh^l(s)] &=& (n-1)(i+l) \cosh^{i}(s) \sinh^{l} \\ &+& (n-1) l \cosh^i (s)\sinh^{l-2}(s) .\end{eqnarray*}

\begin{eqnarray*} d^2_s[ \cosh^i(s) \sinh^l(s)] &=& d_s^2 (\cosh^i(s)) \sinh^l(s) + 2 d_s \cosh^i(s) d_s \sinh^l(s) \\ &+& \cosh^i(s) d^2_s\sinh^l(s) \\ &=& [i^2 \cosh^i(s) -i(i-1)\cosh^{i-2}(s)]\sinh^l(s) \\ &+& 2il \cosh^i \sinh^l(s) \\ &+& \cosh^i(s) [l^2 \sinh^l(s) + l(l-1) \sinh^{l-2}(s)] \\ &=& (i+l)^2 \cosh^i(s) \sinh^l(s) - i(i-1) \cosh^{i-2}(s) \sinh^l(s) \\ &+& l(l-1) \cosh^i(s) \sinh^{l-2}(s).\end{eqnarray*}

Therefore, \begin{eqnarray*}
&&[d_s^2+(n-1) \coth(s) d_s](\cosh^i(s)\sinh^l(s)) \\ &=& (i+l + n-1) (i+l) \cosh^i(s) \sinh^l(s) \\ &-&i(i-1) \cosh^{i-2}(s) \cosh^{l}(s) +(n-2+l)l \cosh^i(s) \sinh^{l-2}(s)
\end{eqnarray*}
Since $u_i = \cosh^i(s) \sinh^{-n-m+2}$, we obtain \begin{eqnarray*}
\mD_m u_i(s) &=& \left[d_s^2+(n-1) \coth(s) d_s-\frac{m(m+n-2)}{\sinh^2(s)} \right]u_i(s) \\ &=& (m-i-1) (m+n-2-i) \cosh^i(s) \sinh^{-m-n+2}(s) \\ &-&i(i-1) \cosh^{i-2}(s) \cosh^{-m-n+2}(s) \\ &=& \kappa_i u_i(s) -i(i-1)u_{i-2}.\end{eqnarray*}
\end{proof}
\section*{Acknowledgement} The author is thankful to Professors M. Agranovsky and P. Kuchment for helpful discussions.  This work was started during the author's stay at MSRI as a Postdoctoral Research Fellow under the program Inverse Problems and Applications. The author thanks MSRI and the program organizers, especially Professor G. Uhlmann, for the supports. 


\def\dbar{\leavevmode\hbox to 0pt{\hskip.2ex \accent"16\hss}d}
\providecommand{\bysame}{\leavevmode\hbox to3em{\hrulefill}\thinspace}
\providecommand{\MR}{\relax\ifhmode\unskip\space\fi MR }
\providecommand{\MRhref}[2]{%
  \href{http://www.ams.org/mathscinet-getitem?mr=#1}{#2}
}
\providecommand{\href}[2]{#2}

\end{document}